\theoremstyle{plain}
\newtheorem*{theorem-non}{Theorem}
\newtheorem*{thm:eulerchar}{Theorem \ref{mresult}}
\newtheorem{theorem}{Theorem}[section]
\newtheorem{corollary}[theorem]{Corollary}
\newtheorem{lemma}[theorem]{Lemma}
\newtheorem{proposition}[theorem]{Proposition}
\theoremstyle{definition}
\newtheorem{definition}{Definition}[section]
\newtheorem{example}[theorem]{Example}
\newcommand{\be}[1]{\begin{equation}\label{#1}}
\newcommand{\ee}{\end{equation}}
\numberwithin{equation}{section}
\newcommand{\bbO}{\mathbb{O}}
\newcommand{\bbX}{\mathbb{X}}
\newcommand{\bbZ}{\mathbb{Z}}
\newcommand{\al}{\alpha}
\renewcommand{\be}{\beta}
\newcommand{\ep}{\varepsilon}
\newcommand{\sgn}{\operatorname{sgn}}
\begin{document}

\title{Link Floer homology categorifies the Conway function}
\author{Mounir Benheddi \and David Cimasoni}
\address{Universit\'e de Gen\`eve, Section de math\'ematiques, 2-4 rue du Li\`evre, 1211 Gen\`eve 4, Switzerland}
\email{Mounir.Benheddi@unige.ch}
\email{David.Cimasoni@unige.ch}
\subjclass[2000]{57M25}  
\keywords{Alexander polynomial, Conway function, link Floer homology, grid diagram}

\begin{abstract}
Given an oriented link in the 3-sphere, the Euler characteristic of its link Floer homology is known to coincide with its multivariable Alexander polynomial,
an invariant only defined up to a sign and powers of the variables. In this paper, we get rid of this ambiguity by proving that this Euler characteristic is equal to the so-called Conway function, the representative of the multivariable Alexander polynomial introduced by Conway in 1970 and explicitly constructed by Hartley in 1983.
This is achieved by creating a model of the Conway function adapted to rectangular diagrams, which is then compared to the Euler characteristic of the combinatorial
version of link Floer homology.
\end{abstract}

\maketitle


\section{Introduction}

The Alexander polynomial is probably the most celebrated invariant of knots and links. Both the one-variable and the multivariable versions were introduced by Alexander in
his seminal 1928 paper~\cite{alexander1928topological}. The latter invariant associates to a
$\mu$-component oriented link~$L$ in the standard~$3$-sphere~$S^3$ a~$\mu$-variable Laurent polynomial~$\Delta_L (t_1,\ldots,t_\mu)\in\bbZ[t_1^{\pm 1},\ldots,t_{\mu}^{\pm 1}]$
defined up to units of this ring, that is, up to a sign and powers of the variables. The indetermination in powers of the variables is not problematic. Indeed, this polynomial
is palindromic, i.e.
\[
\Delta_L(t_1^{-1},\ldots,t_{\mu}^{-1})\overset{\centerdot}{=}\Delta_L(t_1,\ldots,t_{\mu})\,,
\]
where~$\overset{\centerdot}{=}$ denotes equality up to multiplication by units; hence, a natural representative in the ring~$\bbZ[t_1^{\pm 1/2},\ldots,t_{\mu}^{\pm 1/2}]$
can be chosen up to a sign. If~$L$ is a knot, then the Alexander polynomial further satisfies~$\Delta_L(1)=\pm 1$, so it can be normalized by setting this value to be~$+1$.
In the link case however, this sign issue is far from trivial.

In 1970, Conway~\cite{conway1970enumeration} suggested a natural representative of the multivariable Alexander polynomial, later called the \emph{Conway function}.
Given a~$\mu$-component oriented link~$L$, its Conway function is a rational function~$\nabla_L(t_1,\ldots,t_{\mu})$ such that
\[
\nabla_L(t_1,\ldots,t_{\mu}) \overset{\centerdot}{=}
\begin{dcases*}
(t_1-t_1^{-1})\Delta_L (t_1^2) & if~$\mu=1$ \\
\Delta_L (t_1^2,\ldots,t_\mu^2) & if~$\mu >1$\,. \\
\end{dcases*}
\]
Conway also noticed several properties of this invariant via local transformations of the link, including the celebrated {\em skein relation\/}.
However, a precise definition of this invariant together with a proof of its various properties only appeared more than a decade later. In 1983, Hartley~\cite{hartley1983conway}
gave a model for the Conway function as a well chosen normalization of the determinant of a Fox matrix obtained from a Wirtinger presentation of the link group via Fox free calculus.
Other constructions were later given by Turaev~\cite{turaev1986reidemeister} using sign-refined Reidemeister torsion, and by the second
author~\cite{cimasoni2004geometric} using generalized Seifert surfaces.

In 2004, Ozsváth and Szabó~\cite{ozsvath2004holomorphic} introduced Heegaard-Floer homology as an invariant for closed oriented~$3$-manifolds. They later extended this theory
to give an invariant for null-homologous oriented knots in such manifolds~\cite{ozsvath2004holomorphic0}, a construction further generalized to oriented
links~\cite{ozsvath2008holomorphic}. In its most basic form, this \emph{link Floer homology} is a finite-dimensional bi-graded vector space over~$\bbZ_2$
\[
\widehat{\mathit{HL}}(L)= \bigoplus_{d,s} \widehat{\mathit{HL}}_d(L,s)\,,
\]
the direct sum ranging over all~$d\in\bbZ$ and~$s=(s_1,\ldots,s_{\mu})\in\left(\frac{1}{2}\bbZ \right)^{\mu}$.
All these invariants were originally defined using Heegaard diagrams and count of holomorphic discs in the symmetric product of a Riemann surface.
In~\cite{manolescu2006combinatorial, manolescu2007combinatorial}, a purely combinatorial description of these invariants was provided in the
case of links in~$S^3$, relying on the count of rectangles in so-called \emph{grid diagrams\/} for these links.

One of the most fundamental properties of link Floer homology is that it {\em categorifies\/} the multivariable Alexander polynomial. More precisely, its Euler characteristic
\[
\chi(\widehat{\mathit{HL}}(L);t_1,\dots,t_\mu) := \sum\limits_{d,s} (-1)^d \mbox{dim}(\widehat{\mathit{HL}}_d(L,s))\,t_1^{s_1}\cdots t_\mu^{s_\mu}
\]
satisfies the equality
\[
\chi(\widehat{\mathit{HL}}(L);t_1,\dots,t_\mu)\overset{\centerdot}{=}
\begin{dcases*}
\prod\limits_{k=1}^{\mu}(t_k^{1/2}-t_k^{-1/2})\, \Delta_L(t_1,\dots,t_\mu) & if~$\mu>1$ \\
\Delta_L(t_1) & if~$\mu=1$\,.
\end{dcases*}
\]
Therefore, the Euler characteristic of link Floer homology provides a natural representative for the multivariable Alexander
polynomial, and it is natural to ask whether it coincides with the Conway function.

\medskip

In the present article, we give a positive answer to this question.

\begin{theorem}\label{mresult}
Given a~$\mu$-component oriented link~$L$ in~$S^3$, the equation
\[
\chi(\widehat{\mathit{HL}}(L);t_1,\dots,t_\mu)=\prod\limits_{k=1}^{\mu}(t_k^{1/2}-t_k^{-1/2}) \, \nabla_L(t_1^{1/2},\ldots,t_{\mu}^{1/2})
\]
holds in~$\bbZ[t_1^{\pm 1/2},\ldots,t_{\mu}^{\pm 1/2}]$.
\end{theorem}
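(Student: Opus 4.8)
The plan is to work entirely within the combinatorial framework of grid (rectangular) diagrams, expressing both sides of the claimed identity as determinants over the ring $\bbZ[t_1^{\pm 1/2},\ldots,t_\mu^{\pm 1/2}]$ and then matching them term by term. Fix a grid diagram $G$ of grid number $n$ representing $L$, with its $n$ horizontal and $n$ vertical circles and the markings $\bbO,\bbX$. The generators of the combinatorial chain complex $\widehat{GC}(G)$ are the $n$-tuples of intersection points meeting each horizontal and each vertical circle exactly once, so they are naturally indexed by permutations $\sigma\in S_n$. Each generator $x$ carries a Maslov grading $M(x)$ and an Alexander multi-grading $A(x)=(A_1(x),\ldots,A_\mu(x))$, and
\[
\chi\bigl(\widehat{\mathit{HL}}(L);t_1,\ldots,t_\mu\bigr)\ \overset{\centerdot}{=}\ \sum_{x}(-1)^{M(x)}\,t_1^{A_1(x)}\cdots t_\mu^{A_\mu(x)}.
\]

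The first step is to turn this state sum into a determinant. The Alexander gradings are \emph{local}: each $A_k(x)$ decomposes as a sum $\sum_{i=1}^n a_k(i,\sigma(i))$ of winding-number contributions attached to the individual intersection points of $x$, so that $\prod_k t_k^{A_k(x)}=\prod_i m\bigl(i,\sigma(i)\bigr)$ for an explicit monomial-valued matrix $\bigl(m(i,j)\bigr)$. The sign $(-1)^{M(x)}$ likewise reduces, up to an overall factor independent of $\sigma$, to $\sgn(\sigma)$. Substituting these into the state sum yields
\[
\chi\bigl(\widehat{\mathit{HL}}(L)\bigr)\ \overset{\centerdot}{=}\ \sum_{\sigma\in S_n}\sgn(\sigma)\prod_{i=1}^n m\bigl(i,\sigma(i)\bigr)\ =\ \det\bigl(m(i,j)\bigr)_{1\le i,j\le n},
\]
so the left-hand side is, up to a unit, the determinant of an explicit grid matrix $M_G$.

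Independently, I would construct a model of the Conway function directly from the grid. The planar diagram underlying $G$ yields a finite presentation of the link group, to which Fox's free differential calculus associates an Alexander matrix $A_G$ over $\bbZ[t_1^{\pm 1},\ldots,t_\mu^{\pm 1}]$. Applying Hartley's normalization---striking an appropriate column, dividing by the associated elementary factor, and multiplying by the symmetrizing monomial---produces a candidate $\nabla_G$. I would then verify that $\nabla_G$ is invariant under the Cromwell (grid) moves and takes the correct value on the unlink, which by Hartley's characterization identifies it with $\nabla_L(t_1^{1/2},\ldots,t_\mu^{1/2})$ up to the stated normalization. The advantage of this grid-adapted model is that $A_G$ shares the monomial structure of $M_G$, making the two determinants directly comparable.

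The hard part will be matching the two determinants exactly, signs included. Both $\chi(\widehat{\mathit{HL}}(L))$ and $\nabla_L$ are refinements of the Alexander polynomial that remove the $\overset{\centerdot}{=}$ ambiguity---the former through the Maslov grading, the latter through Hartley's sign-refined normalization---and a priori there is no reason for the two choices to agree; essentially all of the content lies in showing that they do. I expect the comparison of $\det(M_G)$ with the normalized $\det(A_G)$ to leave a residual unit $\pm\,t_1^{c_1}\cdots t_\mu^{c_\mu}$, and the task is to show this unit equals exactly $+\prod_k(t_k^{1/2}-t_k^{-1/2})$ after the half-integer substitution. I would fix the exponents $c_k$ using the palindromic symmetry shared by both sides, and fix the overall sign by a single global computation, comparing the Maslov grading of one distinguished generator against the orientation data entering Hartley's normalization. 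Reducing the whole identity to this one base case and then propagating it via the already-established invariance of both constructions under grid moves seems the safest way to avoid an intractable crossing-by-crossing sign analysis.
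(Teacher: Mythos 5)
Your overall architecture is the same as the paper's: build a Fox-matrix model of the Conway function adapted to the grid presentation, turn the Euler characteristic into a determinant using the locality of the Alexander grading and the fact that $(-1)^{M(x)}\sgn(x)$ is constant on grid states, and kill the residual monomial by palindromic symmetry. The genuine gap is in your identification step. There is no ``Hartley characterization'' asserting that invariance under the Cromwell moves together with the correct value on the unlink determines $\nabla_L$: invariance plus a single normalization point never characterizes an invariant (the zero invariant is grid-move invariant, as is $\nabla_L$ plus anything vanishing on the unlink). What Hartley proved is well-definedness of his construction, not a characterization; the characterizations available are Murakami's six local relations and Jiang's five, and these are unavoidable here because your grid-adapted normalization (built from the Neuwirth presentation, whose generators are vertical segments and whose relations are the lines between consecutive horizontal segments) is \emph{not} an instance of Hartley's Wirtinger-based recipe, so his theorem does not apply to it directly. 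The paper's Section~\ref{hartley} spends most of its effort verifying Jiang's relations $R1$--$R5$ for the grid model $\Gamma_L$ --- in particular the skein-type relations $R4$ and $R5$, which require explicit local Fox-matrix computations. Without some such system of local relations (or, alternatively, a Tietze-move comparison of the Neuwirth and Wirtinger presentations that tracks Hartley's normalization $\kappa_k-\eta_k$, which you do not propose and which is not routine), your candidate $\nabla_G$ is simply an unidentified grid invariant.

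A secondary but real issue: the state sum $\sum_{x}(-1)^{M(x)}t^{A(x)}$ over all grid states computes $\chi(\widetilde{\mathit{HL}}(L);t)$, which equals $\prod_k(1-t_k^{-1})^{n_k-1}\,\chi(\widehat{\mathit{HL}}(L);t)$; since $(1-t_k^{-1})^{n_k-1}$ is not a unit, your first display is false as an equality up to $\overset{\centerdot}{=}$, and the discrepancy depends on the grid size, not just on $L$. Matching non-unit factors also arise when you column-reduce the winding-number matrix to the Fox matrix (each column contributes a factor $\theta(x_j)-1$), and the whole point of the bookkeeping is that these cancel against the $(1-t_k^{-1})^{n_k-1}$ leaving exactly one factor $\prod_k(t_k^{1/2}-t_k^{-1/2})$. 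Your plan for the final normalization is otherwise sound and coincides with the paper's: the residual is a genuine monomial with coefficient $+1$ (no sign to fix, because the sign is already pinned by evaluating $(-1)^{M(x_0)}\sgn(x_0)$ at the distinguished generator $x_0$ sitting at the $O$'s), and the symmetry $\Gamma_L(t^{-1})=(-1)^\mu\Gamma_L(t)$ together with $\chi(\widehat{\mathit{HL}}(L);t)=\chi(\widehat{\mathit{HL}}(L);t^{-1})$ forces that monomial to be $1$.
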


We also sketch a proof of the fact that Turaev's model coincides with the second author's geometric model, which was shown to agree with Hartley's model
in~\cite{cimasoni2004geometric}. Therefore, we are in the presence of four different constructions of the same invariant. The main interest in identifying them is that some
properties of the Conway function are totally transparent in one model, and quite surprising in others. For example, the geometric model is very well suited
for discovering skein-type relations for the Conway function, whose translation in terms of link Floer homology is not always obvious. Also, the surgery formula obtained
by Boyer-Lines in~\cite{boyer1992conway} using Hartley's model does not appear to be known in link Floer homology.

It should be mentioned that even though Hartley's model and the combinatorial version of link Floer homology are only defined for links in
the standard~$3$-sphere, the models of Turaev and of the second author are defined for links in an arbitrary integral homology~$3$-sphere, and so is link Floer homology.
The question of whether these models coincide for links in integral homology spheres is addressed in a slightly informal way in the last section of the present article.

\medskip

This paper is organized as follows. In Section~\ref{hartley}, we briefly recall Hartley's construction of the Conway function~$\nabla_L$ and amend it to define a model
$\Gamma_L$ adapted to rectangular diagrams. We then show that~$\Gamma_L$ satisfies Jiang's five characterizing relations~\cite{jiang2014conway} and therefore coincides with
$\nabla_L$. In Section~\ref{grid}, we define grid diagrams, recall their use in the combinatorial definition of link Floer homology, and identify~$\chi(\widehat{\mathit{HL}}(L))$
with~$\Gamma_L$. Finally, Section~\ref{models} discusses the identification with other models as well as the extension to homology spheres.

\subsubsection*{Acknowledgments} 
The authors thankfully acknowledge support by the Swiss National Science Foundation. The second author also wishes to express his thanks to Steven Boyer and Vladimir Turaev
for useful exchanges.


\section{A model for the Conway function using rectangular diagrams}
\label{hartley}

This section focuses on adapting Hartley's construction to a special type of diagrams, called rectangular diagrams. We first recall Hartley's definition~\cite{hartley1983conway},
and introduce rectangular diagrams as well as Neuwirth's associated presentation of the link group. Then we turn to the definition of our model~$\Gamma_L$, show its invariance,
and identify it with Hartley's model.

\subsection{Hartley's construction of~$\nabla_L$}
\label{sub:hartley}

Let~$L=L_1 \cup \cdots \cup L_{\mu}$ be a~$\mu$-component oriented link in~$S^3$, and let~$W=\left<x_1,\ldots, x_n \, | \, r_1,\ldots, r_{n}\right>$ be a Wirtinger presentation of
$\pi_1(S^3 \backslash L)$. Recall that the generators (resp. the relations) are in one-to-one correspondence with the arcs (resp. the crossings) of the diagram.
Let~$\theta\colon\bbZ[\pi_1(S^3 \backslash L)]\rightarrow \bbZ[t_1^{\pm 1},\ldots, t_{n}^{\pm 1}]$ be the extension of the abelianization morphism,
i.e. the unique morphism of rings mapping~$x_j$ to~$t_k$ if the arc corresponding to~$x_j$ belongs to~$L_k$.
For all~$1\le i\le n$, consider a path from a fixed point outside the diagram to a point lying at the right of both the over-crossing and under-crossing arcs of the
$i^\mathrm{th}$ crossing; this path intersects some arcs of the diagram and thus gives a word~$u_i$ in the~$x_j$'s. Define
\[
D(t_1,\ldots,t_\mu)= \frac{(-1)^{i+j}\mbox{det}(M^{(ij)})}{\theta(u_i)(\theta(x_j)-1)}\,,
\]
where~$M=\left(\theta\left(\frac{\partial r_i}{\partial x_j}\right) \right)_{i,j}$ is the image under~$\theta$ of the Jacobian matrix of~$W$ with respect to free differential calculus (see~\cite{fox1954free}), and~$M^{(ij)}$ is~$M$ with the~$i^\mathrm{th}$ row and~$j^\mathrm{th}$ column removed. This function~$D$ is a rational function which does not depend on the choice of~$i$ and~$j$, but still needs to be normalized to give a well-defined invariant. This is done as follows.

For each component~$L_k$ of the link, consider the projection of that component and resolve every crossing as illustrated below.
\begin{center}
\includegraphics{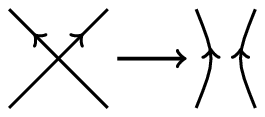}
\end{center}
Once this process is completed, one obtains a collection of oriented circles. The \emph{curvature}~$\kappa_k$ of~$L_k$ 
is defined as the number of anticlockwise circuits minus the number of clockwise circuits. Finally let~$\eta_k$ be the number of crossings where the over-crossing arc is~$L_k$.

The \emph{Conway function} of~$L$ is the rational function given by
\[
\nabla_L(t_1,\ldots,t_\mu):= D(t_1^2,\ldots,t_\mu^2)\, t_1^{\kappa_1-\eta_1}\cdots t_\mu^{\kappa_\mu-\eta_\mu}\,.
\]
As checked by Hartley~\cite{hartley1983conway}, it is a well-defined link invariant. Furthermore, it was recently established by Jiang~\cite{jiang2014conway}
that it is characterized by a relatively simple set of five relations (see paragraph~\ref{proof} below).

\subsection{Rectangular diagrams}
\label{rect}

\begin{definition}
A \emph{rectangular diagram} is a link diagram~$D$ with the conditions that:
\begin{enumerate}[(i)]
\item{The diagram is composed of horizontal and vertical line segments only.}
\item{At each crossing, the vertical segment is the over-crossing and the horizontal one the under-crossing.}
\item{No two segments are collinear.}
\end{enumerate}
\end{definition}

Clearly, any link can be represented by a rectangular diagram. For example, the transformation illustrated below can be used to ensure that condition (ii) is satisfied.
\begin{center}
\includegraphics{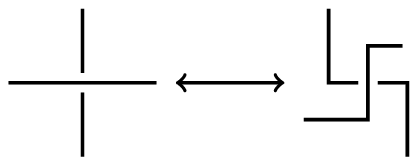}
\end{center}
Note that because of this condition, there is no need to differentiate the over-crossings and under-crossings
in such a diagram. Also condition (iii) allows for the diagram to be included in a grid whose rows and columns contain respectively a unique horizontal and vertical segment.
We shall call the turning points of segments \emph{corners}; there are four types of corners, that all appear in the example for the trivial knot given in
Figure~\ref{gammacalc}. 

\smallskip

Cromwell~\cite{cromwell1995embedding} proved that any two rectangular diagrams describing the same link can be connected through a finite sequence of the following
elementary moves and their inverses.

\begin{enumerate}[$(G1)$]
\item{Cyclic permutation of vertical (resp. horizontal) segments: the leftmost (resp. topmost) segment is moved to the rightmost (resp. bottommost) position.
}
\item{Commutation of horizontal or vertical segments: two adjacent columns or rows can be permuted if, when one is projected onto the other, their images are either disjoint or one is completely contained in the other.
}
\item{Stabilization: at a selected corner of the diagram, add a row and a column next to the corner. There is a choice of adding the new column to the right or left of the selected column and a choice of over or under the selected row for the new one, which yields four different stabilizations for each type of corner.
Using the previous moves, it is enough however to consider that operation for one type of corner only.
}
\end{enumerate}

There exists a presentation of the link group suited for rectangular diagrams, introduced by Neuwirth in~\cite{neuwirth1984proj}. The generators are in one-to-one correspondence with the vertical segments of the diagram (numbered from left to right). Suppose there are~$n$ vertical segments, then there are also~$n$ horizontal segments. This presentation has~$n-1$ relations, one for each \emph{line} in the diagram, i.e. for each position between two consecutive horizontal segments. Consider a path crossing the diagram at constant height between the~$i^\mathrm{th}$ and~$(i+1)^\mathrm{th}$ horizontal segments (counted from the top). In terms of generators it corresponds to the product of the vertical segments crossed.
This product is the~$i^\mathrm{th}$ relation~$r_i$.  

This presentation is well behaved under the action of Fox's free derivatives: indeed, any relation is of the form~$r=x_{j_1} \cdots x_{j_m}$ with different~$j_\ell$'s, so
\[
\frac{\partial r}{\partial x_j} =
\begin{dcases*}
x_{j_1}\cdots x_{j_{\ell-1}} & if ~$j=j_\ell$,~$1\le \ell \le m$,\\
0 & otherwise.
\end{dcases*}
\]
Given a rectangular diagram~$D$, define the associated Fox matrix as the~$(n-1)\times n$ matrix~$F_D$ with~$(i,j)$-coefficient~$\theta\left(\frac{\partial r_i}{\partial x_j} \right)$,
with~$\theta$ the abelianization morphism as above.
Note that the non-zero entries of~$F_D$ correspond to the intersections of the vertical segments of the diagram with the lines of the diagram.
Furthermore, such a non-zero coefficient at position~$(i,j)$ is given by~$t_1^{\varphi_1}\cdots t_\mu^{\varphi_\mu}$, where~$\varphi_k$ is the algebraic intersection number of 
$L_k$ with the~$i^{\mathrm{th}}$ line left of the~$j^{\mathrm{th}}$ vertical segment. (Here, an intersection point is counted positively if~$L_k$ is oriented upwards.) Note the equality~$\sum_j\frac{\partial r_i}{\partial x_j}(x_j-1)=r_i-1$, which implies
\begin{equation}\label{FFDC}
\sum_{j=1}^n(\theta(x_j)-1)\theta\left(\frac{\partial r_i}{\partial x_j}\right)=0
\end{equation}
for all~$1\le i\le n$.

\begin{example}\label{example1}
Examples of rectangular diagrams for the unknot~$K$, the Hopf link~$H$ and the trefoil~$T$ are illustrated in Figure~\ref{gammacalc}.
The corresponding Fox matrices are given by
\[
F_K=\left(\begin{matrix}1 & t^{-1}\end{matrix}\right),
\quad
F_T =
\left(
\begin{matrix}
1       & 0     & 0     & t^{-1} & 0 \\
1       & 0     & t^{-1} & 1   & t \\
1       & t^{-1} & 1    & 0     & t \\
0       & 1      & 0     & 0    & t \\
\end{matrix}
\right),
\quad\text{and}\quad
F_H =
\left(
\begin{matrix}
1       & 0     & t_1^{-1}      & 0 \\
1       & t_1^{-1}      & t_1^{-1}t_2^{-1} & t_2^{-1}  \\
0       & 1   & 0       & t_2^{-1} \\
\end{matrix}
\right)\,.
\]
\end{example}

\begin{figure}[Htb]
\begin{center}
\includegraphics{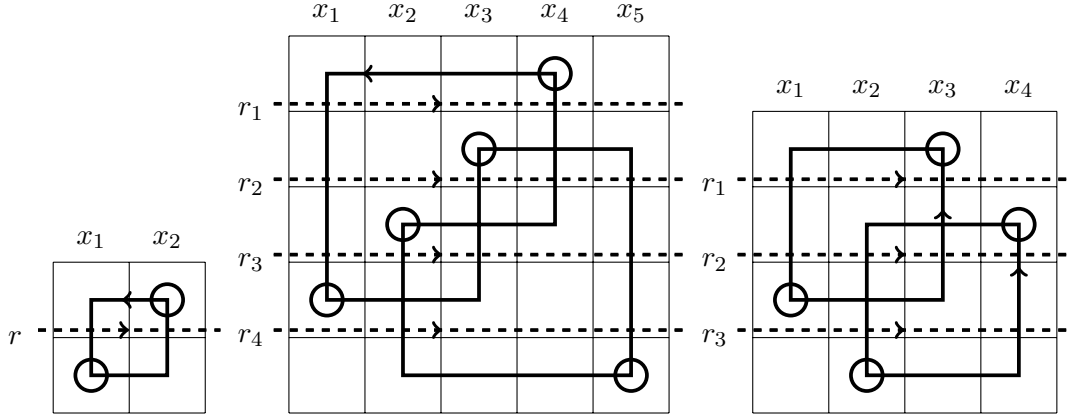}
\caption{Three examples of rectangular diagrams for the trivial knot, the trefoil, and the Hopf link.
The corresponding bases are given by the permutations~$x_0=(12)$,~$x_0=(14)(23)$ and~$x_0=(13)(24)$.}
\label{gammacalc}
\end{center}
\end{figure}

\subsection{Definition and invariance of the model~$\Gamma_L$}\label{sub:gamma}

From this section onwards, we will use the notations~$t=(t_1,\ldots, t_\mu)$, and~$t^{s}=\prod_{k=1}^{\mu}t_k^{s_k}$ for
$s=(s_1,\ldots,s_{\mu})\in\left(\frac{1}{2}\bbZ\right)^{\mu}$. Additionally, the vector~$(b,\ldots,b)\in\left(\frac{1}{2}\bbZ\right)^{\mu}$ will be denoted by~$\bf{b}$.

Given an oriented link~$L$ given by a rectangular diagram~$D$, let us write 
\[
M(t)= \frac{(-1)^{j}}{\theta(x_j)-1}\mbox{det}(F_D^{j})\,,
\]
where~$F_D^{j}$ is~$F_D$ with the~$j^\mathrm{th}$ column removed. The fact that~$M(t)$ is independent of the choice of~$j$ follows from Equation~(\ref{FFDC}).

Each corner is either the beginning or the end of a horizontal segment;
the former ones will be emphasized on the diagrams by small circles. These~$n$ circles define a permutation~$x_0\in S_n$, called the \emph{base of the diagram}, in the following way.
Number the rows top to bottom and the columns left to right from~$1$ to~$n$. The rows represent the domain of the permutation, the columns the co-domain, and the pairs~$(i,x_0(i))$
are given by the position of the circles. Examples are given in Figure~\ref{gammacalc}.

Note that if a corner is the intersection of the~$i^\mathrm{th}$ horizontal and~$j^\mathrm{th}$ vertical segments, then only one of~$(F_D)_{i-1,j}$ or~$(F_D)_{i,j}$ is
non-zero. We will call this coefficient the \emph{weight} of that corner. The \emph{total weight}~$\omega$ of the diagram is defined as the product of the weights of all
the corners of the diagram. 

\begin{definition}\label{def:gamma}
Given an oriented rectangular diagram~$D$, define
\[
\Gamma_D(t) := \frac{\sgn(x_0)(-1)^{u}}{\omega \, t^{\kappa}}M(t^{\bf{2}})\,,
\]
where~$\sgn(x_0)\in\{\pm 1\}$ is the signature of the base~$x_0\in S_n$,~$u$ the number of upwards segments,~$\omega$ the total weight,
$\kappa=(\kappa_1,\dots,\kappa_\mu)$ the curvature, and~$M(t)$ the above normalization of the determinant of the Fox matrix associated with~$D$.
\end{definition}

\begin{example}
\label{ex2}
Building on Example~\ref{example1}, let us compute the value of~$\Gamma_D$ for the diagrams~$K$,~$H$ and~$T$ of Figure~\ref{gammacalc}.
For the trivial knot~$K$,~$\sgn(x_0) = -1$. Using~$F_K$, one computes~$\omega=t^{-2}$. From the diagram, we see that~$u=\kappa=1$.
Removing the first column of~$F_K$ yields~$M(t) = -\frac{t^{-1}}{t^{-1}-1}$ and hence:
\[
\Gamma_{K}(t)=\frac{(-1)(-1)}{t^{-2}t^{1}}\,\frac{-t^{-2}}{t^{-2}-1} = \frac{1}{t-t^{-1}}\,.
\
\]
For the trefoil~$T$,~$\sgn(x_0) = 1$,~$\omega = t^{-1}$,~$u=3$ and~$\kappa=0$. The determinant of~$F_T^{1}$ equals~$-t^{-2} + t^{-1} - 1$, therefore:
\[
\Gamma_{T}(t)= (t-t^{-1})^{-1}(t^2 - 1 + t^{-2})\,.
\]
For the Hopf link~$H$,~$\sgn(x_0)=1$,~$\omega=t^{\mathbf{-3}}$,~$u=3$ and~$\kappa = t^{\mathbf{1}}$. The determinant of~$F_H^{1}$ is given by~$-t_1^{-1}t_2^{-1}(t_1^{-1}-1)$, so
\[
\Gamma_{H}(t) = 1\,.
\]
\end{example}

\begin{proposition}\label{gammainv}
The function~$\Gamma_{D}$ is an invariant of oriented links.
\end{proposition}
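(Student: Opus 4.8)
The plan is to invoke Cromwell's theorem~\cite{cromwell1995embedding}: since any two rectangular diagrams of the same oriented link are related by a finite sequence of the moves $(G1)$, $(G2)$, $(G3)$ and their inverses, it suffices to prove that $\Gamma_D$ is unchanged under each of these three moves. Recall that $\Gamma_D$ is assembled from five ingredients, namely the normalized Fox determinant $M(t^{\mathbf{2}})$, the signature $\sgn(x_0)$ of the base, the parity $(-1)^u$ of the number of upward segments, the total weight $\omega$, and the curvature factor $t^{\kappa}$. For each move I would compute how every one of these five ingredients transforms and then check that the induced changes cancel in the quotient of Definition~\ref{def:gamma}. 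Throughout, the independence of $M(t)$ from the deleted column, guaranteed by Equation~\eqref{FFDC}, gives the freedom to expand the determinant along whichever column is most convenient after the move has been performed.

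First I would treat the commutations $(G2)$ and the cyclic permutations $(G1)$, which are the easier cases because they preserve the size $n$ of the Fox matrix. A commutation of two adjacent columns (or rows) whose projections are disjoint or nested amounts to a single transposition of columns (rows) of $F_D$: this contributes a factor $-1$ to $\det(F_D^{j})$ and simultaneously flips $\sgn(x_0)$, while the geometric quantities $u$, $\omega$ and $\kappa$ are manifestly unchanged, since the orientations of the segments, the corner weights, and the Seifert-circuit count are all unaffected; hence $\Gamma_D$ is preserved. The disjoint-or-nested hypothesis is precisely what guarantees that no new incidence between a segment and a line is created, so that the columns are genuinely merely transposed. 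The cyclic permutations $(G1)$ are handled analogously, except that the wraparound shifts the intersection numbers $\varphi_k$ recorded by the nonzero entries of $F_D$; here I would verify that the resulting monomial change in $\omega$ and in $M(t^{\mathbf{2}})$ is exactly compensated by the change in $t^{\kappa}$, while the sign of the length-$n$ cycle acting on the columns matches the change in $\sgn(x_0)$, whose co-domain is indexed by those columns.

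The main obstacle is the stabilization move $(G3)$, since it enlarges the Fox matrix from $(n-1)\times n$ to $n\times(n+1)$ and alters all five ingredients at once. Using the freedom allowed in the statement of $(G3)$, together with $(G1)$ and $(G2)$, I would first reduce to stabilizing at a single fixed corner type placed in a convenient position, so as to minimize the linear algebra. For that normal form, the new column (equivalently, the new row) of $F_{D'}$ contains a single nonzero entry, which is a unit; a cofactor expansion along it then yields $\det(F_{D'}^{\,j'}) = \pm(\text{unit})\cdot\det(F_D^{\,j})$, reducing the enlarged determinant to the original one. The heart of the argument is to match this cofactor sign $(-1)^{i+j}$ and the unit against the simultaneous changes in $\sgn(x_0')$ (the base acquires one more point, shifting the signature by a definite sign), in $(-1)^{u'}$ (one new vertical segment, counted according to its orientation), in $\omega'$ (one new corner weight), and in $t^{\kappa'}$ (the stabilization may change the curvature of the affected component by a single unit). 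Showing that these four corrections absorb exactly the unit and the sign produced by the expansion, so that $\Gamma_{D'}=\Gamma_D$, is the delicate bookkeeping step and the crux of the whole proposition; reducing to one standard corner beforehand is what keeps this computation tractable.
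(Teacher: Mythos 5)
Your overall strategy---reducing to Cromwell's moves and tracking how each of the five ingredients of Definition~\ref{def:gamma} transforms---is exactly the paper's, and your outlines of $(G1)$ and $(G3)$ match the actual computations (in particular, the single nonzero unit entry in the new \emph{column} created by a stabilization, cofactor expansion, and the matching of the resulting sign and unit against $\sgn(x_0)$, $u$, $\omega$ and $\kappa$). However, your treatment of $(G2)$ contains a step that fails in the nested case. You claim that a commutation of two adjacent columns whose projections are disjoint or nested ``amounts to a single transposition of columns of~$F_D$,'' so that the determinant only picks up a sign while $u$, $\omega$ and $\kappa$ are ``manifestly unchanged.'' This is only true when the projections are disjoint. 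The nonzero entry of~$F_D$ at position~$(i,j)$ is a monomial recording the algebraic intersection of each component with the $i^\mathrm{th}$ line \emph{to the left of} the $j^\mathrm{th}$ vertical segment; when a short vertical segment is pushed past a long one it changes sides, so every nonzero entry in its column is multiplied by~$t_k^{-\ep}$ (where $L_k$ is the component of the long segment and $\ep$ its orientation). One finds $\det(F'^{q})=-t_k^{-\ep}\det(F^{q})$, not $-\det(F^{q})$; simultaneously the two corner weights of the short segment change, giving $\omega'=\omega\, t_k^{-2\ep}$, and only the combination of these two corrections cancels in~$\Gamma_D$. So the invariance here genuinely requires the weight factor to move, contrary to your claim that it is untouched.

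A related, smaller inaccuracy: for horizontal commutations the rows of~$F_D$ are indexed by the $n-1$ lines between consecutive horizontal segments, not by the segments themselves, so commuting two adjacent rows of the grid does not transpose two rows of~$F_D$; it replaces the single line separating the two segments by a new one. Even in the disjoint case one needs a linear relation of the form
\[
-\theta\Bigl(\frac{\partial r_{\ell}}{\partial x_j}\Bigr)=\theta\Bigl(\frac{\partial r_{\ell}'}{\partial x_j}\Bigr)-\theta\Bigl(\frac{\partial r_{\ell-1}'}{\partial x_j}\Bigr)-\theta\Bigl(\frac{\partial r_{\ell+1}'}{\partial x_j}\Bigr)
\]
to conclude $\det(F'^{1})=-\det(F^{1})$, and in the nested case the determinant again acquires a monomial factor that is compensated by the corner weights. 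These are local computations rather than missing ideas, but as written your $(G2)$ argument would not close.
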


\begin{proof}
By Cromwell's theorem, we only need to check that~$\Gamma_D$ is left unchanged by the moves~$G1$,~$G2$ and~$G3$ described above.
Throughout the proof, the unprimed quantities refer to the diagrams on the left (``before the move'') and the primed ones to those on the right (``after the move'').
The first pictured line of a diagram is always the~$\ell^\mathrm{th}$, and the first column the~$q^\mathrm{th}$. Set~$\ep_j=1$ if~$x_j$ is oriented upwards and~$\ep_j=-1$ otherwise.

\smallskip

\textbf{(G1)} Consider first the case of vertical cyclic permutation, illustrated below.
\medskip
\begin{center}
\includegraphics{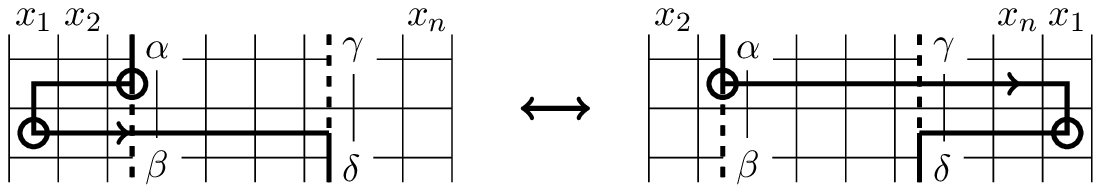}
\end{center}
Without loss of generality, assume that the moved segment belongs to~$L_1$ and set~$\ep=\ep_1$. If there are~$m$ rows of the grid affected by the move, then there are~$m-1$ relations that change. (In the illustration above,~$m=2$.) This leads to the equality
$\theta\left(\frac{\partial r_i'}{\partial x_j}\right)= t_1^{-\ep}\theta\left(\frac{\partial r_i}{\partial x_j}\right)$ for
these~$m-1$ indices~$i$ and for all~$j$'s. Computing~$M(t)$ by removing the first column of~$F$ and~$M'(t)$ by removing the last column of~$F'$, we get
\[
M'(t)=(-1)^{n-1}t_1^{-(m-1)\ep}M(t)\,.
\]
Note that the two bases~$x_0$ and~$x_0'$ are related by a cyclic permutation of order~$n$, so~$\sgn(x_0')=(-1)^{n-1}\sgn(x_0)$. Also, the number of upwards segments
is left unchanged by this move.

Let us now analyze how the corner weights and curvature are modified.
There are~$2m$ corners whose weight might change. The~$2m-2$ ``interior corners'' will have their weight multiplied by~$t_1^{-\ep}$. As for the two remaining ones,
the result will depend on the position of the vertical segment at each of these two corners. There are two possible configurations for the left one (denoted by~$\alpha$ and~$\beta$), and 
two for the right one ($\gamma$ and~$\delta$) which are indicated in the picture above:~$\al$ and~$\delta$ as solid lines,~$\beta$ and~$\gamma$ as dashed lines.
In the configuration~$\al$ (resp.~$\delta$) the weight of the left (resp. right) corner does not change, while in the configuration~$\beta$ (resp.~$\gamma$), the weight of left
(resp. right) corner is multiplied by a factor~$t_1^{-\ep}$. Note finally that the curvature is multiplied by~$t_1^\ep$ (resp.~$1$,~$t_1^{2\ep}$,~$t_1^{\ep}$) in the configuration
$\alpha\gamma$ (resp.~$\alpha\delta$,~$\beta\gamma$,~$\beta\delta$). It appears that in every case, the factor~$\omega t^{\kappa}$ changes by a factor~$t_1^{-(2m-2)\ep}$.
Therefore, all the changes cancel out and~$\Gamma_{D'}=\Gamma_{D}$.

Now consider the horizontal cyclic permutation, with~$m$ columns of the grid affected, illustrated below in the case~$m=2$.
\begin{center}
\includegraphics{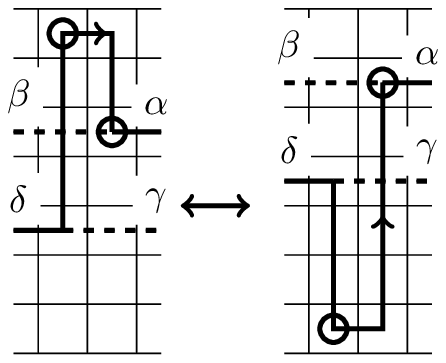}
\end{center}
In that case, the Fox matrix changes only between the columns~$q$ and~$q+m-1$. Let~$A$ be the matrix obtained from~$F$ by removing the first line and those two columns.
A quick computation gives~$\det(F^{q})= (-1)^{q}t_1^{\ep}\det(A)$ and~$\det(F'^{q+1}) = (-1)^{q+n}t_1^{-(m-2)\ep}\det(A)$, so
\[
M'(t) = (-1)^{n-1}t_1^{-(m-1)\ep}M(t)\,.
\]
Here again, we have~$\sgn(x_0')=(-1)^{n-1}\sgn(x_0)$ and~$u'=u$.
For the total weight and curvature, one easily checks that in each of the four possible configurations,~$\omega t^{\kappa}$ is multiplied by a factor~$t_1^{-(2m-2)\ep}$.
The equality~$\Gamma_{D'}=\Gamma_D$ follows.

\smallskip

\textbf{(G2)} Let us first consider the case of a vertical commutation with disjoint segments. 
The only quantities that change are the signature of the basis, multiplied by~$-1$, and the determinant, also multiplied by~$-1$, so~$\Gamma_{D'} = \Gamma_D$.

Let us now assume that the segments are not disjoint, as in the following illustration.
\begin{center}
\includegraphics{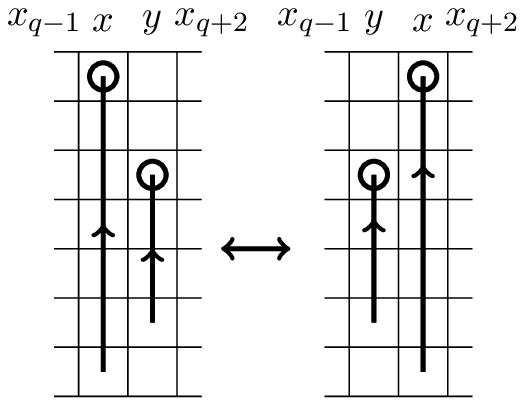}
\end{center}
Clearly, the signature of the base is multiplied by~$-1$, while~$u'=u$ and~$\kappa'=\kappa$.
Let us denote by~$L_k$ (resp.~$L_l$) the component of~$L$ to which the vertical segment~$x$ (resp.~$y$) belongs. The parts of the Fox matrices affected by the move are of the form
\[
F=
\begin{pmatrix}
\ast & 0 \\
\al_i & \al_it_k^{\ep_x} \\
\vdots & \vdots \\
\al_{i+p} & \al_{i+p}t_x^{\ep_x} \\
\ast & 0 \\
\end{pmatrix}
\quad\text{and}\quad
F'=
\begin{pmatrix}
0 & \ast  \\
\al_i & \al_it_l^{\ep_y} \\
\vdots & \vdots \\
\al_{i+p} & \al_{i+p}t_l^{\ep_y} \\
0 & \ast  \\
\end{pmatrix}\,.
\]
Subtracting from the~$q^\mathrm{th}$ column of~$F$ its~$(q+1)^\mathrm{th}$ column multiplied by~$t_k^{-\ep_x}$, and from the~$(q+1)^\mathrm{th}$ column of~$F'$
its~$q^\mathrm{th}$ column multiplied by~$t_l^{\ep_y}$ leads to the relation~$\det(F'^{q}) = - t_k^{-\ep_{x}}\det(F^{q})$.
As for the corner weights, only the two corners of the small segment change their contribution, yielding~$\omega'=\omega t_k^{-2\ep_x}$.
The equality~$\Gamma_{D'}=\Gamma_D$ follows.

Consider now the horizontal commutation with disjoint segments, pictured below.
\begin{center}
\includegraphics{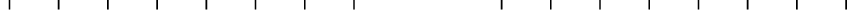}
\end{center}
In that case, the signature of the base is multiplied by~$-1$, while~$u'=u$ and~$\kappa'=\kappa$. Furthermore, one easily sees that
\[
-\theta\left(\frac{\partial r_{\ell}}{\partial x_j}\right) = \theta\left(\frac{\partial r_{\ell}'}{\partial x_j}\right)
-\theta\left(\frac{\partial r_{\ell-1}'}{\partial x_j}\right)-\theta\left(\frac{\partial r_{\ell+1}'}{\partial x_j}\right)
\]
for all~$j$'s. This implies that~$\det(F'^1)=-\det(F^1)$, leading to the expected invariance.

Let us now turn to the case where the segments are not disjoint, illustrated below.
\begin{center}
\includegraphics{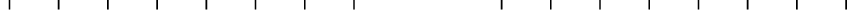}
\end{center}
In this case, only the base and the matrix change.
Let~$a,b,c,d$ be the vertical segments at the extremities of the horizontal ones as pictured above. We obtain 
\[
-t_k^{\ep_a}\theta\left(\frac{\partial r_{\ell}}{\partial x_j}\right) = \theta\left(\frac{\partial r_{\ell}'}{\partial x_j}\right)
-t_k^{\ep_a}\theta\left(\frac{\partial r_{\ell-1}'}{\partial x_j}\right)-\theta\left(\frac{\partial r_{\ell+1}'}{\partial x_j}\right)
\]
for all~$j$'s, where~$a$ belongs to~$L_k$. This leads to the equality~$\det(F'^1)=-t_k^{-\ep_a}\det(F^1)$.
The only corners affected by the change in the matrix are those of the small segment: they are both multiplied by~$t_k^{\ep_a}$, so all the changes cancel out once again.

\smallskip

\textbf{(G3)}  Consider the first stabilization pictured below.
\begin{center}
\includegraphics{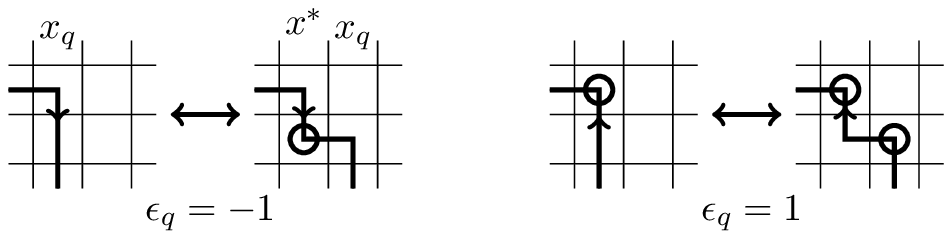}
\end{center}
In the case~$\ep_q=-1$, the move adds a circle at the~$(\ell+1,q)$ position of the grid, so~$\sgn(x_0')=(-1)^{(\ell+q+1)}\sgn(x_0)$, while the number of upwards segments
does not change. In the case~$\ep_q=1$, the move adds a circle at the~$(\ell+1,q+1)$ position of the grid, so~$\sgn(x_0')=(-1)^{(l+q)} \sgn(x_0)$, while~$u'=u+1$.
Thus in both cases, the overall sign changes by a factor~$(-1)^{(\ell+q+1)}$.
There are two new corners, each with a weight~$\alpha:=\theta\left(\frac{\partial r^\ast}{\partial x^{\ast}}\right)$, where~$x^\ast$ and~$r^\ast$ denote the new generator and relation
created by the stabilization. This implies~$\omega'=\omega\al^2$.
The matrix~$F'$ is obtained from~$F$ by adding a line and a column; developing the determinant with respect to that column yields
$\det(F'^{1}) = (-1)^{(\ell+q+1)}\al\det(F^{1})$, so all the changes cancel out and~$\Gamma_{D'}= \Gamma_D$.

The three remaining types of stabilizations are illustrated below.
\begin{center}
\includegraphics{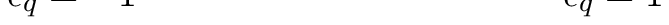}
\includegraphics{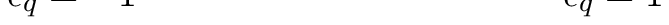}
\includegraphics{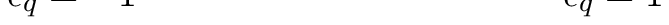}
\end{center}
They can be treated in the same way.
\end{proof}

\subsection{Identification of~$\Gamma_L$ and~$\nabla_L$}
\label{proof}

By Proposition~\ref{gammainv},~$\Gamma_D$ is an invariant and can therefore be denoted by~$\Gamma_L$. The aim of this paragraph is to prove the following result.

\begin{proposition}
\label{gammavsnabla}
For any oriented link~$L$, the invariants~$\Gamma_L$ and~$\nabla_L$ coincide. In particular,~$\Gamma_L$ satisfies the equality
\begin{equation}\label{gammasym}
\Gamma_L(t^{-1})=(-1)^\mu\Gamma_L(t)\,.
\end{equation}
\end{proposition}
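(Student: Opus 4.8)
The strategy is to appeal to Jiang's theorem~\cite{jiang2014conway}, which asserts that the Conway function is the unique invariant of oriented links satisfying a prescribed list of five relations. Since Proposition~\ref{gammainv} already guarantees that $\Gamma_D$ depends only on the underlying link $L$, it suffices to verify that $\Gamma_L$ obeys each of Jiang's five relations; the identity $\Gamma_L=\nabla_L$ then follows from uniqueness. The symmetry~\eqref{gammasym} need not be established on its own: once the identification is in place it is inherited from the classical relation $\nabla_L(t^{-1})=(-1)^\mu\nabla_L(t)$.

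I would first dispatch the normalization relation, which is precisely the unknot value $\Gamma_K(t)=1/(t-t^{-1})$ recorded in Example~\ref{ex2}, together with those among Jiang's relations that do not involve switching a crossing. Each such relation prescribes the effect of a purely local modification of the link, and in the rectangular model this modification touches only a small block of the Fox matrix. The verification then follows the same pattern as the move-by-move bookkeeping in the proof of Proposition~\ref{gammainv}: one isolates the affected block, evaluates the modified determinant by a short sequence of column operations, and confirms that the normalizing data $\sgn(x_0)$, $(-1)^u$, $\omega$ and $t^\kappa$ conspire to absorb the resulting discrepancy.

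The heart of the matter, and the step I expect to be the main obstacle, is the skein relation relating the three links of a Conway triple $L_+$, $L_-$, $L_0$. The difficulty is structural: in a rectangular diagram the crossing convention is rigid, every vertical segment passing over every horizontal one, so a crossing cannot simply be flipped in place. To represent $L_+$, $L_-$ and the oriented smoothing $L_0$ by rectangular diagrams that agree away from one small region, I would first use the moves $G1$--$G3$ to bring the relevant crossing into a standard local model, and then compare the three associated Fox matrices. A sequence of column operations should then express a linear relation among the determinants $\det(F^{j})$ of the three diagrams, in the spirit of the determinant identities already exploited for the $(G2)$ commutations. The delicate point is to track the base signature, the upward count $u$, the total weight $\omega$ and the curvature $\kappa$ simultaneously across the three diagrams and to check that, after multiplication by the respective normalizations, they combine into exactly the skein relation satisfied by $\nabla_L$. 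Keeping the powers of the variables coherent across the three pictures---where the local orientations, and hence the exponents $\ep_j$ entering each weight and each curvature contribution, differ---is where the bulk of the computation lies.

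Once all five relations are verified, Jiang's uniqueness statement forces $\Gamma_L=\nabla_L$, and the symmetry~\eqref{gammasym} follows immediately; as an independent confirmation one may also observe it directly on $\Gamma_D$ by applying the definition to the reflection of $D$ that reverses all orientations, under which $u$, $\kappa$ and the Fox matrix transform so as to send each $t_k$ to $t_k^{-1}$ and produce the global sign $(-1)^\mu$. Finally I would cross-check the whole scheme against Example~\ref{ex2}: the values $\Gamma_T=(t-t^{-1})^{-1}(t^2-1+t^{-2})$ and $\Gamma_H=1$ match the known Conway functions of the trefoil and the Hopf link, and both are visibly compatible with~\eqref{gammasym}.
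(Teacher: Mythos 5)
Your overall strategy---invoke Jiang's uniqueness theorem and, given the invariance already secured by Proposition~\ref{gammainv}, verify each of the five characterizing relations by localizing the computation in the Fox matrix---is exactly the route the paper takes, and the bookkeeping template you describe (isolate the affected block, perform row/column operations, and check that $\sgn(x_0)$, $(-1)^u$, $\omega$ and $t^\kappa$ absorb the discrepancy) is precisely how the paper treats the local relations.

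The gap is in \emph{what} you propose to verify. Jiang's list does not contain the classical skein relation for a Conway triple $L_+$, $L_-$, $L_0$, which you single out as the heart of the matter. The normalization in his list is the value $1$ on the positive Hopf link (not the unknot value you cite, though both appear in Example~\ref{ex2}); the remaining relations are the vanishing on a split union with an unknot, a two-colour local relation $\nabla_{L'}=(t_i-t_i^{-1})\nabla_{L_0}$, the double-crossing relation $\nabla_{L_{++}}+\nabla_{L_{--}}=(t_it_j-t_i^{-1}t_j^{-1})\nabla_{L_0}$ (note the sum, not a difference, and the two distinct colours $i,j$), and a six-term identity among links $L(0),\dots,L(5)$ differing in a three-strand tangle. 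Verifying the single-crossing skein relation would not feed into Jiang's theorem, and it could not suffice: a crossing between strands of two different components has a smoothing with fewer components, so there is no skein triple in the same variables there, and the self-crossing skein relations alone do not pin down the multivariable invariant---this is precisely why Murakami needed six relations and Jiang five, including genuinely multi-strand ones. The real burden of the proof lies in $(R4)$ and especially $(R5)$, where one must produce compatible local rectangular models for all the links involved, expand their local Fox matrices against a common family of minors of the reference diagram, and check that a prescribed linear combination of the resulting coefficients vanishes identically; your plan, as written, does not engage with this. Your concluding step---deducing the symmetry~\eqref{gammasym} from $\Gamma_L=\nabla_L$ and the known symmetry of the Conway function---is fine and is also how the paper reads the ``in particular''.
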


To verify this statement, we first used Murakami's characterization theorem~\cite{murakami1992local}, which states that~$\nabla_L$ is determined uniquely by a system
of six local relations. However Jiang~\cite{jiang2014conway} recently showed that Conway's function is characterized by the following (simpler) set of five relations, which we
will use in our proof.
\[
\tag{$R1$}\nabla_H = 1\,,
\]
where~$H$ is the positive Hopf link.
\[
\tag{$R2$}\nabla_{L \sqcup K} = 0\,,
\]
where~$L \sqcup K$ denotes the disjoint union of~$L$ and a trivial knot~$K$.
\[
\tag{$R3$}\nabla_{L'} = (t_i -t_i^{-1})\,\nabla_{L_0}\,,
\]
where~$L'$ is obtained from~$L_0$ by the local operation given below.
\begin{center}
\includegraphics{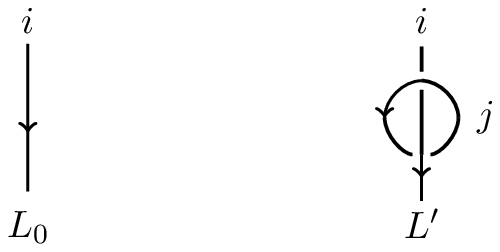}
\end{center}
\medskip
\[
\tag{$R4$}\nabla_{L_{++}} + \nabla_{L_{--}} = (t_it_j -t_i^{-1}t_j^{-1})\,\nabla_{L_0}\,,
\]
where~$L_{++}$,~$L_{--}$ and~$L_0$ differ by the following local operation.
\medskip
\begin{center}
\includegraphics{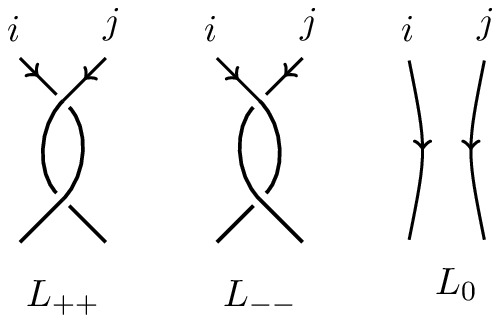}
\end{center}
\medskip
\[
\tag{$R5$}(t_it_k^{-1}-t_i^{-1}t_k^1)(\nabla_{L(0)} + \nabla_{L(1)})+(t_jt_k - t_j^{-1}t_k^{-1})(\nabla_{L(2)} + \nabla_{L(3)})
			+(t_i^{-1}t_j^{-1} - t_it_j)(\nabla_{L(4)} + \nabla_{L(5)})=0\,,
\] 
where~$L(0)$,~$L(1)$,~$L(2)$,~$L(3)$,~$L(4)$, and~$L(5)$ differ by the following local operation.
\medskip
\begin{center}
\includegraphics{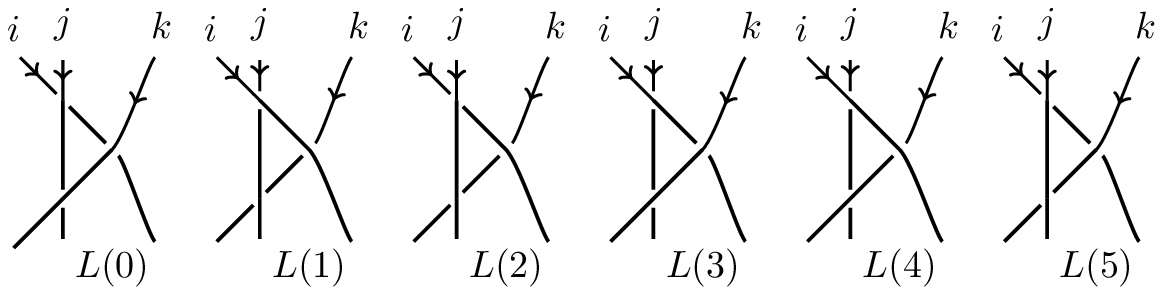}
\end{center}

\begin{proof}[Proof of Proposition~\ref{gammavsnabla}]
By Jiang's theorem, we only need to check that~$\Gamma_L$ satisfied the five relations described above. Let us first fix some notations.
In the grid diagrams illustrated below, the first horizontal pictured segment will always be the~$\ell^\mathrm{th}$, and the first vertical one the~$q^\mathrm{th}$.
Only the ``local'' Fox matrices will be written, i.e. the parts where the Fox matrices differ. The simplest diagram will be called~$L_0$ and used as a reference for the weight, curvature, base, matrices and number of upwards segments, with the associated quantities indexed by a zero. The quantities for the other diagrams will be indexed according to their names.

\smallskip

\textbf{(R1)} This property was verified in Example~\ref{ex2}.

\smallskip

\textbf{(R2)}
A possible rectangular diagram for~$L \sqcup K$ is given below.
\begin{center}
\includegraphics{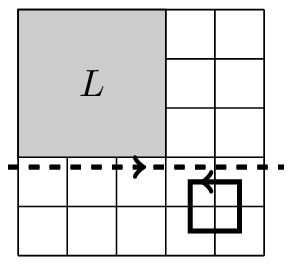}
\end{center}
The line in the Fox matrix corresponding to the dashed line illustrated above is identically zero. Therefore any~$(n-1)$-minor of the matrix vanishes and so does~$\Gamma_{L \sqcup K}$.

\smallskip

\textbf{(R3)} This relation compares the value of~$\Gamma_L$ for the two following diagrams.
\begin{center}
\includegraphics{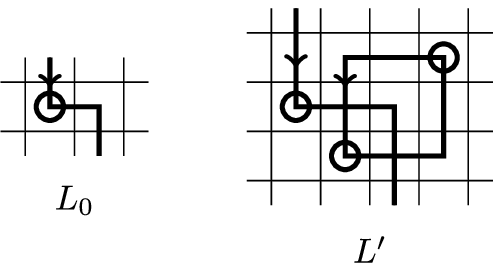}
\end{center}
Fix~$i=1$ and~$j=2$. The (local) Fox matrices are of the form
\[
F_0^q=
\begin{pmatrix}
  0  \\
 \al \\
\end{pmatrix},
\hspace{1cm}
F'^q=
\begin{pmatrix}
	0	&	0	&	0	\\
 \al t_1^{-1} 	&	0	& \al t_1^{-1}t_2^{-1} \\
 \al   &   \al t_2^{-1}  &   \al t_2^{-1}t_1^{-1} \\
 0   &  \al   &   0  \\	
\end{pmatrix}\,.
\]
This yields the equality~$\mbox{det}(F'^q)=\al^2t_1^{-1}t_2^{-1}(1-t_1^{-1})\,\mbox{det}(F^q)$.
Clearly,~$\kappa'=\kappa + (0,1)$ and~$u'=u_0+1$, and one easily checks that~$\sgn(x')=-\sgn(x_0)$.
The total weights for~$L'$ and~$L$ are related by~$\omega'=\omega_0\al^4t_1^{-3}t_2^{-3}$. Combining all these equalities, we get
\[
\Gamma' = \frac{(-1)(-1)}{\al^4t_1^{-3}t_2^{-3}t_2}\al^4t_1^{-2}t_2^{-2}(1-t_1^{-2})\Gamma_0 = (t_1 - t_1^{-1})\Gamma_0\,.
\]

The remaining two relations require more care. Given a line of a matrix, consider the determinant preserving operation
given by subtracting to this line an adjacent line. The resulting line is zero everywhere except for the entries separated by a horizontal
segment of the diagram. Therefore one can expand the determinant with respect to that line and all the information is contained in the local matrix.

\smallskip

\textbf{(R4)} Fix~$i=1$ and~$j=2$. Possible rectangular diagrams for this local operation are pictured below.
\begin{center}
\includegraphics{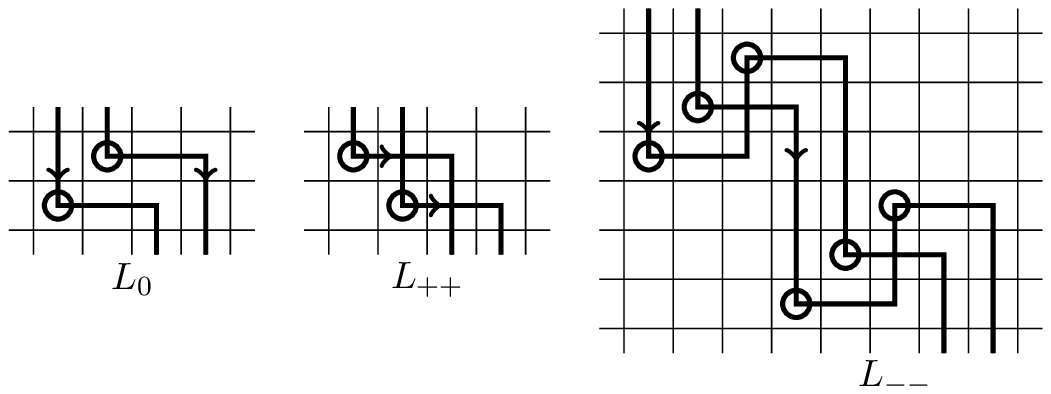}
\end{center}
The corresponding Fox matrices are of the following form:
\[
F_0^q=
\begin{pmatrix}
c_1 & 0 & 0 \\
\al t_1^{-1} & 0 & 0 \\
0 & 0 & \al t_1^{-1} \\
0 & \al & \al t_1^{-1} \\
0 & c_2 & c_3
\end{pmatrix},\quad
F_{++}^q=
\begin{pmatrix}
c_1 & 0 & 0 \\
\al t_1^{-1} & 0 & 0 \\
\al & \al t_2^{-1} & 0 \\
0 & \al & \al t_1^{-1}\\
0 & c_2 & c_3
\end{pmatrix},\quad \text{and}
\]
\[
F_{--}^q=
\begin{pmatrix}
c_1          &  0 & 0 & 0 & 0 & 0 & 0 \\
\al t_1^{-1} &	0 & 0 & 0 & 0 &	0 & 0 \\
\al t_1^{-1} & \al t_1^{-1}t_2^{-1} & 0 & \al t_2^{-1} & 0 & 0 & 0 \\
0 & \al t_1^{-1} & \al & \al t_2^{-1} & 0 & 0 & 0 \\
0 & 0 & \al & \al t_2^{-1} & 0 & 0 & 0 \\
0 & 0 & \al & \al t_2^{-1} & \al t_2^{-1}t_1^{-1} & 0 & \al t_1^{-1} \\
0 & 0 & \al & 0 & \al t_2^{-1} & \al &\al t_1^{-1} \\
0 & 0 &	0 & 0 &	0 & \al & \al t_1^{-1} \\
0 & 0 & 0 & 0 & 0 & c_2 & c_3
\end{pmatrix}\,,
\]
where~$c_1,c_2$ and~$c_3$ are column vectors.
For~$1 \leq i \leq 3$, let~$A_i$ be~$F_0^{q}$ with the third (local) line and the column containing~$c_i$ removed and set~$B_{i}(t)=\mbox{det}(A_{i}(t^\mathbf{2}))$.  Finally, set
$\beta_0=\frac{\sgn(x_0)(-1)^{u_0}}{\omega_0 t^{\kappa_0}}\frac{1}{t_1^2-1}$. The values of the functions~$\Gamma$ will be computed by using row operations such that the last column of each local matrix is zero, except for its two lowest entries. 

Clearly,~$\kappa_0$,~$\kappa_{++}$ and~$\kappa_{--}$ coincide,~$u_0$,~$u_{++}$ and~$u_{--}$ all have the same parity, while~$\sgn(x_0)=-\sgn(x_{++})=\sgn(x_{--})$. Moreover, 
the corresponding weights satisfy the equalities~$\omega_{++}=\omega_0t_1t_2^{-1}$ and~$\omega_{--}=\omega_0\al^{8}t_1^{-3}t_2^{-5}$.  Finally, for~$F_0^{q}$, replace the third (local) line by its difference with the second (local) line, expanding with respect to the third (local) line yields~$\Gamma_0 = (-1)^{\ell}\beta_0\al^2B_2$. The others are obtained similarly as:
\[
\Gamma_{++}= (-1)^{\ell}\beta_0\al^2t_1^{-1}t_2(-(1-t_1^{-2})B_1 + t_2^{-2}B_2),
\hspace{0.3cm}
\Gamma_{--}=(-1)^{\ell}\beta_0\al^2t_1t_2(-( t_1^{-4}-t_1^{-2})B_1 + B_2)\,.
\]
It follows easily that~$\Gamma_{++} + \Gamma_{--}=(t_1t_2 + t_1^{-1}t_2^{-1})\Gamma_0$.

\smallskip

\textbf{(R5)} Let us suppose that~$i=1$,~$j=2$, and~$k=3$. The diagrams considered are the following,
\medskip
\begin{center}
\includegraphics{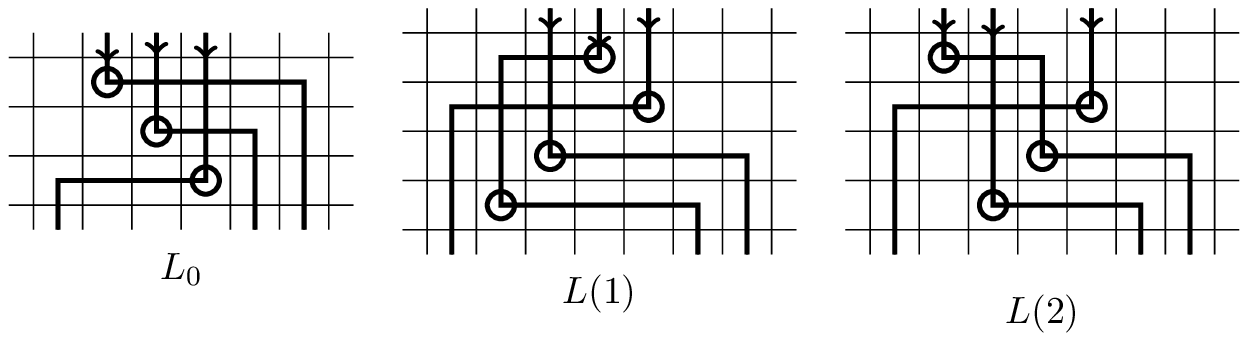}
\end{center}
\begin{center}
\includegraphics{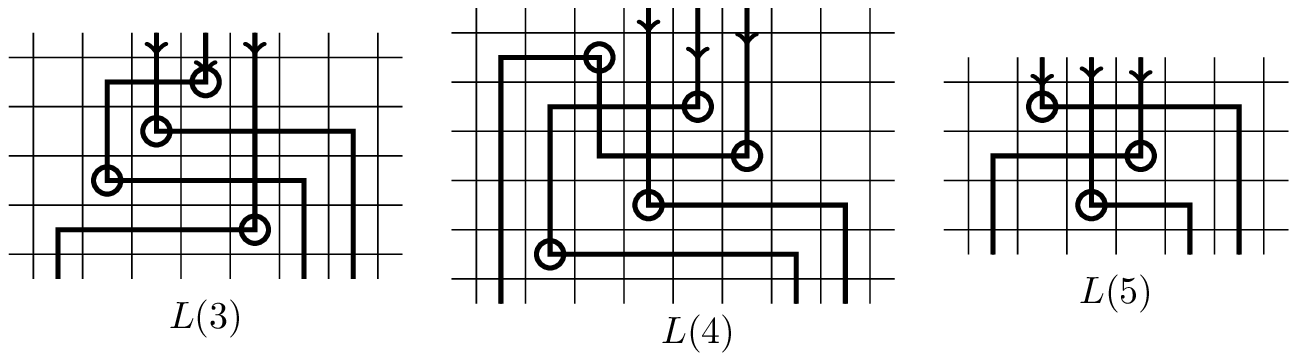}
\end{center}

\noindent defining local Fox matrices of the form:
\[
F_0^{q+5}=
\begin{pmatrix}
0   & c_2 & c_3 &    c_4       & 0          \\
0   & \al & \al t_1^{-1} &    \al t_1^{-1}t_2^{-1}       & 0           \\
0   &  0  & \al & \al t_2^{-1} & 0           \\
0   &  0  &  0  &    \al       & \al t_3^{-1} \\
\al &  0  &  0  &    0         & \al t_3^{-1}         \\
c_1 &  0  &  0  &    0         & c_5
\end{pmatrix},
F_1^{q+6}=
\begin{pmatrix}
0   & 0             & c_2                  & c_3 & c_4                 & 0   \\
0   & 0             & \al                  & \al t_1^{-1} & \al t_1^{-1}t_2^{-1}                 & 0   \\
0   & \al           & \al t_2^{-1}         & 0   & \al t_2^{-1}t_1^{-1}& 0   \\
\al & \al t_3^{-1}  & \al t_3^{-1}t_2^{-1} & 0   & 0                   & 0   \\
\al & \al t_3^{-1}  & 0                    & 0   & 0                   & 0   \\
\al & 0             & 0                    & 0   & 0                   & \al t_3^{-1} \\
c_1 & 0             & 0                    & 0   & 0                   & c_5 \\
\end{pmatrix},
\]
\[
F_2^{q+6}=
\begin{pmatrix}
0   & c_1 & c_2         & 0                   & c_3                 & 0   \\
0   & 0   & \al         & \al t_2^{-1}        & \al t_2^{-1}t_1^{-1}& 0   \\
\al & 0   & \al t_3^{-1}& \al t_3^{-1}t_2^{-1}& 0                   & 0   \\
\al & 0   & \al t_3^{-1}& 0                   & 0                   & 0   \\
\al & 0             & 0                    & 0   & 0                   & \al t_3^{-1} \\
c_1 & 0             & 0                    & 0   & 0                   & c_5 \\
\end{pmatrix},
F_3^{q+6}=
\begin{pmatrix}
0   & 0             & c_2                  & c_3 & c_4                 & 0   \\
0   & 0             & \al                  & \al t_1^{-1} & \al t_1^{-1}t_2^{-1}                 & 0   \\
0   & \al & \al t_2^{-1}& 0      & \al t_2^{-1}t_1^{-1}& 0             \\
0   & \al & 0           & 0      & \al t_2^{-1}        & 0             \\
0   & 0   & 0           & 0      & \al                 & \al t_3^{-1}  \\
\al & 0             & 0                    & 0   & 0                   & \al t_3^{-1} \\
c_1 & 0             & 0                    & 0   & 0                   & c_5 \\
\end{pmatrix},
\]
\[
F_4^{q+7}=
\begin{pmatrix}
0   & 0             &                    0 & c_2                  & c_3 & c_4                 & 0   \\
0   & 0             & 0                    & \al                  & \al t_1^{-1} & \al t_1^{-1}t_2^{-1}                 & 0   \\
\al & 0             & \al t_3^{-1}        & \al         & \al t_1^{-1} & \al t_1^{-1}t_2^{-1} & 0 \\
\al & \al t_3^{-1}  & \al t_3^{-1}t_2^{-1}& \al t_2^{-1}& 0      & \al t_2^{-1}t_1^{-1}        & 0 \\
\al & \al t_3^{-1}  & 0                   & \al t_3^{-1}t_2^{-1} & 0      & 0                 & 0 \\
\al & \al t_3^{-1}  & 0                   & 0           & 0      & 0                   & 0       \\    
\al & 0             & 0                    & 0   & 0  &0                 & \al t_3^{-1} \\
c_1 & 0             & 0                    & 0   & 0  &0                & c_5 \\
\end{pmatrix},
F_5^{q+5}=
\begin{pmatrix}
0   & c_2 & c_3 &    c_4       & 0          \\
0   & \al & \al t_1^{-1} &    \al t_1^{-1}t_2^{-1}       & 0           \\
0   &  0  & \al          & \al t_2^{-1} & 0           \\
\al &  0  & \al t_3^{-1} &    0         & 0           \\
\al &  0  &  0  &    0         & \al t_3^{-1}         \\
c_1 &  0  &  0  &    0         & c_5
\end{pmatrix}\,.
\]
For~$1\le i,j\le 5$, let~$A_{ij}$ be~$F_0^{q+5}$ with the third and fourth (local) lines, as well as the columns containing
$c_i$ and~$c_j$ removed, and set~$B_{ij}(t)=\mbox{det}(A_{ij}(t^\mathbf{2}))$. Finally, set
$\beta_0=\frac{\sgn(x_0)(-1)^{u_0}}{\omega_0 t^{\kappa_0}}\frac{(-1)^{q+5}}{t_1^2-1}$. The values of the~$\Gamma$ functions are computed by using row operations such that the first column of each local matrix is zero, except for its two lowest entries. 

First note that neither the curvature nor the number of upwards segments change, except for~$L(4)$ where~$u_4=u_0+1$.
Moreover, the sign of the base changes as follows:~$\sgn(x_0)=(-1)^{\ell+q}\sgn(x_1)=(-1)^{\ell+q}\sgn(x_2)=(-1)^{\ell+q}\sgn(x_3)=-\sgn(x_4)=-\sgn(x_5)$. Note that there is an additional sign for~$L(1), L(2),L(3)$ due to the fact that the removed columns do not have the same parity as~$q+5$.
The weights satisfy~$\omega_1=\omega_0\al^2t_1^{-2}t_2^{-2}t_3^{-2}$,
$\omega_2=\omega_0\al^2t_1^{-1}t_2^{-3}t_3^{-2}$,~$\omega_3=\omega_0\al^2t_1^{-1}t_2^{-1}$,~$\omega_4=\omega_0\al^4t_1^{-2}t_2^{-3}t_3^{-5}$, and
$\omega_5=\omega_0t_2^{-1}t_3^{-1}$. For~$F_0^{q+5}$, replace the fourth (local) line by its difference with the third (local) line, then the third (local) line by its difference with the second (local) line, leading to:
\[
\begin{array}{lll}
\Gamma_{L_0} &= &\beta_0\al^4\cdot(B_{23} - (t_2^{-2}-1)B_{24} -t_3^{-2}B_{25} -(t_1^{-2}-1)B_{34} + t_3^{-2}(t_1^{-2}-1)B_{35} \\
& & - t_2^{-2}t_3^{-2}(t_1^{-1}-1)B_{45})\,.\\
\end{array}
\]
The others are computed similarly:
\[
\begin{array}{lll}
\Gamma_{L(1)} &= &\beta_0\al^4t_1^{2}t_3^{-2}\cdot( t_1^{-2}B_{23}- B_{25})\\

\Gamma_{L(2)}&=&\beta_0\al^4t_1t_2t_3^{-2}\cdot(B_{12} -B_{25}+(t_1^{-2}-1)B_{35})\\

\Gamma_{L(3)} &=& \beta_0\al^4t_1t_2^{-1}\cdot(t_1^{-2}B_{23}-t_1^{-2}(t_2^{-2}-1)B_{24}-t_3^{-2}B_{25}-t_1^{-2}(t_1^{-2}-1)B_{34} \\
& &-t_3^{-2}(t_1^{-2}-1)B_{45})\\

\Gamma_{L(4)} &=& \beta_0\al^4t_1^{2}t_2^{-1}t_3^{-1}\cdot(t_1^{-2}B_{23} - t_1^{-2}(t_2^{-2}-1)B_{24}-t_3^{-2}B_{25})\\

\Gamma_{L(5)} &=& \beta_0\al^4t_2t_3^{-1}\cdot(B_{23} -B_{25} -t_2^{-2}(t_1^{-2}-1)B_{34} + (t_1^{-2}-1)B_{35} -t_2^{-2}(t_1^{-2}-1)B_{45})\,.
\end{array}
\]
Letting ~$c_{ij}^k$ be the coefficient of~$B_{ij}$ in~$\Gamma_{L(k)}$ (divided by~$\beta_0\al^4$), it suffices to show that
\[
\Gamma_{ij} := (t_1t_3^{-1}-t_1^{-1}t_3)(c_{ij}^{0} + c_{ij}^{1}) 
			  + (t_2t_3 - t_2^{-1}t_3^{-1})(c_{ij}^{2} + c_{ij}^{3}) 
			+(t_1^{-1}t_2^{-1}-t_1t_2)(c_{ij}^{4}+c_{ij}^5)
\]
vanishes for all~$i,j \in \{2,3,4,5\}$. For example~$B_{24}$ appears only in~$L(0)$,~$L(3)$ and~$L(4)$, and we compute
\begin{eqnarray*}
\Gamma_{24} &=& (t_1t_3^{-1}-t_1^{-1}t_3)( -(t_2^{-2}-1) + (t_2t_3 - t_2{-1}t_3^{-1})(-t_1^{-1}t_2^{-1}(t_2{-2}-1))\\
			& & + (t_1^{-1}t_2^{-1}-t_1t_2)(-t_2^{-1}t_3^{-1}(t_2{-2}-1))=0\,.
\end{eqnarray*}
The other equations can be verified in the same way through direct computation.
\end{proof}


\section{Link Floer homology and Euler characteristic}\label{grid}

This section gives a very quick review of the combinatorial version of link Floer homology with~$\bbZ_2$ coefficients
following~\cite{manolescu2006combinatorial,manolescu2007combinatorial}, and mentions selected properties. We then use these properties to show that
link Floer homology categorifies the function~$\Gamma_L$ introduced in the previous section. Theorem~\ref{mresult} follows.

\subsection{Grid diagrams and gradings}

Given an oriented rectangular diagram, consider each corner labeled by an~$O$ or an~$X$ with the following convention: for each horizontal segment we travel from an “$O$” to an “$X$” whereas for each vertical segment we travel from an “$X$” to an “$O$”. One obtains a \emph{grid diagram} by fitting the data of those~$X$'s and~$O$'s in a grid. Due to condition (iii), each row and column of the grid contain exactly one~$X$ and one~$O$. 
From a grid diagram, one can construct a rectangular diagram by drawing horizontal segments oriented from~$O$ to~$X$ and vertical segments oriented from~$X$ to~$O$.
Rectangular and grid diagrams are thus equivalent, and the elementary moves for grid diagrams are the same as for rectangular ones. Examples are given in Figure~\ref{exx0}.

Note that the~$O$'s lie in the exact same place as the circles defining the base of a rectangular diagram.
We denote by~$\bbX$ the set of all~$X$'s and by~$\bbO$ the set of all~$O$'s of the diagram. 

\begin{figure}[Htb]
\begin{center}
\includegraphics{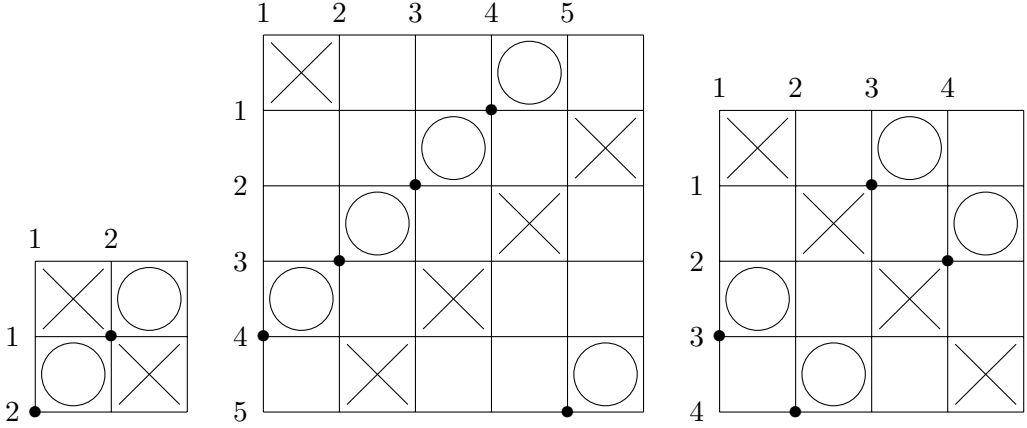}
\caption{The three grid diagrams corresponding to the three rectangular diagrams of Figure~\ref{gammacalc}.}
\label{exx0}
\end{center}
\end{figure}

\medskip

Given a grid diagram~$G$ of size~$n$, number the horizontal lines from~$1$ to~$n$, starting with the second line, and the vertical lines from~$1$ to~$n$, starting with the leftmost line (see Figure \ref{exx0}). Let~$S(G)$ (or simply~$S$) be the set of all~$n$-tuples of intersection points between horizontal and vertical lines, with the property that no intersection point appears on more than one horizontal or vertical line. There is an element of~$S(G)$ which corresponds exactly to the base~$x_0$ of the diagram: it is given by
the~$n$-tuple which occupies the lower left corner of each square containing an~$O$ (indicated in Figure~\ref{exx0} as a collection of black dots).

\medskip

The set~$S$ is equipped with two gradings.
The \emph{Maslov grading}~$M\colon S\longrightarrow \bbZ$ is defined as follows. Given two collections~$A, B$ of finitely many points in the plane,
let~$I(A,B)$ be the number of pairs~$(a_1,a_2) \in A$ and~$(b_1,b_2) \in B$ such that~$a_1<b_1$ and~$a_2<b_2$ and set~$J(A,B)=(I(A,B)+I(B,A))/2$.
Given an element~$x \in S$, we view~$x$ as a collection of points with integer coordinates. Similarly the sets~$\bbO=\{O_i\}_{i=1}^{n}$ and~$\bbX=\{X_i\}_{i=1}^{n}$ are viewed as a collection of points with half-integer coordinates. Define 
\begin{equation*}
M(x)=J(x-\bbO,x-\bbO)+1\,,
\end{equation*}
where~$J$ is extended bilinearly over formal sums and differences of subsets.
Note that the Maslov index of the base equals~$1-n$.

For a link of multiplicity~$\mu$, the \emph{Alexander grading} is the function~$A: S \longrightarrow \left(\frac{1}{2}\bbZ\right)^{\mu}$ defined as a~$\mu$-tuple~$A(x)=(A_1(x),\ldots,A_{\mu}(x))$ by the formula
\[
A_k(x)=J\left(x-\frac{1}{2}(\bbX+\bbO),\bbX_k - \bbO_k \right)- \left(\frac{n_k-1}{2}\right)\,,
\]
where~$\bbO_k \subset \bbO$ and~$\bbX_k \subset \bbX$ are the subsets corresponding to the~$k^\mathrm{th}$ component of the link,~$n_k$ is the cardinal of~$\bbO_k$, and~$J$ is again extended bilinearly.

\begin{lemma}\label{sign}
For any two element~$x,y \in S$, we have~$(-1)^{M(x)}\,\sgn(x)= (-1)^{M(y)}\, \sgn(y)$.
\end{lemma}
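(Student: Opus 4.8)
The plan is to reduce the statement to invariance of $(-1)^{M(x)}\sgn(x)$ under a single transposition, and then to settle the matter with one parity computation. Every element $x\in S$ amounts to a choice of one intersection point in each row and each column, hence determines a permutation $\sigma_x\in S_n$ (sending a column to the row of the point it contains), with $\sgn(x)=\sgn(\sigma_x)$; conversely every permutation arises this way. Since $S_n$ is generated by transpositions and any transposition of $\sigma_x$ is realized by swapping the two points of $x$ lying in the two affected rows and columns (which keeps the configuration in $S$), it suffices to prove that $(-1)^{M(x)}\sgn(x)$ is unchanged when $x$ and $x'$ differ by such a swap. As a transposition flips $\sgn$, I must show equivalently that $M(x)$ and $M(x')$ have opposite parities.

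First I would expand the Maslov grading by bilinearity. Writing $J(A,A)=I(A,A)$, one gets
\[
M(x)=I(x,x)-\bigl(I(x,\bbO)+I(\bbO,x)\bigr)+I(\bbO,\bbO)+1 .
\]
The term $I(\bbO,\bbO)$ is independent of $x$. The term $I(x,x)$ counts the pairs of points of $x$ in strictly increasing position, i.e. the non-inversions of $\sigma_x$; consequently $(-1)^{I(x,x)}=(-1)^{\binom{n}{2}}\sgn(x)$, so $I(x,x)$ changes parity under a transposition. The whole argument therefore comes down to showing that the mixed term $I(x,\bbO)+I(\bbO,x)$ is unchanged modulo $2$ under the swap.

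The main obstacle, and the only genuine computation, is this last parity claim; I would handle it as follows. Write $I(x,\bbO)+I(\bbO,x)=\sum_{z\in x}f(z)$, where $f(z)$ is the number of $O$'s lying strictly northeast or strictly southwest of $z$. Since the coordinates of $\bbO$ (half-integers) never coincide with those of $x$ (integers), every $O$ falls in exactly one open quadrant of $z$, and a short count gives $f(z)\equiv n+g(z)+h(z)\pmod 2$, where $g(z)$ (resp. $h(z)$) is the number of $O$'s strictly to the left of (resp. strictly below) $z$ — quantities depending only on the column (resp. row) of $z$. The swap replaces the two points $p=(c_1,r_1)$, $q=(c_2,r_2)$ by $p'=(c_1,r_2)$, $q'=(c_2,r_1)$, so that $\{p,q,p',q'\}$ are the four corners of a rectangle. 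Hence $g(p)=g(p')$ and $g(q)=g(q')$, while $h(p)=h(q')$ and $h(q)=h(p')$, and therefore $f(p)+f(q)+f(p')+f(q')\equiv 0 \pmod 2$. As only the contributions of the swapped points change, $\sum_{z\in x}f(z)$ is preserved modulo $2$, as required.

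Combining the three observations would then show that $M(x)-M(x')$ is odd while $\sgn(x')=-\sgn(x)$, so that $(-1)^{M(x)}\sgn(x)=(-1)^{M(x')}\sgn(x')$. Invariance under all transpositions together with the connectivity of $S$ under swaps would finally yield the stated equality for arbitrary $x,y\in S$. I expect everything except the rectangle cancellation in the third paragraph to be routine bookkeeping.
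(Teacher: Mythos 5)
Your proof is correct, and it reaches the conclusion by the same top-level reduction as the paper: connect $x$ to $y$ through swaps realizing transpositions, observe that a swap flips $\sgn$, and show that it changes the parity of $M$. The difference lies in how that last parity claim is established. The paper simply cites Lemma~2.5 of Manolescu--Ozsv\'ath--Sarkar--Szab\'o--Thurston, which computes $M(x)-M(y)$ for a rectangle connecting $x$ and $y$ as $1$ minus an even quantity. You instead prove the claim from the definition: the bilinear expansion $M(x)=I(x,x)-\bigl(I(x,\bbO)+I(\bbO,x)\bigr)+I(\bbO,\bbO)+1$, the identification of $I(x,x)$ with the non-inversion count of $\sigma_x$ (hence odd change under a transposition), and the quadrant count $f(z)\equiv n+g(z)+h(z)\pmod 2$ reducing the mixed term to column-only and row-only data, which the rectangle swap permutes. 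I checked the quadrant identity (with $n$ the sum of the four open-quadrant counts, $g$ the west counts and $h$ the south counts, the northeast-plus-southwest count indeed equals $n+g+h$ modulo $2$) and the rectangle cancellation; both are sound. What your route buys is self-containedness --- no appeal to the external Maslov-grading formula, and no need to worry that the connecting rectangle may fail to be empty (the paper's phrasing glosses over the extra even term $2\#(x\cap\mathrm{Int}(r))$ in the cited lemma) --- at the cost of a page of bookkeeping that the citation avoids.
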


\begin{proof}
By additivity of the signature, we only need to check that if the permutations~$x$ and~$y$ differ by a transposition, then~$M(x)-M(y)$ is odd. Using the terminology of~\cite{manolescu2007combinatorial},
this means that there is an empty rectangle~$r$ connecting~$x$ and~$y$ (or~$y$ and~$x$). By Lemma 2.5 of that same article, we have
\[
\pm (M(x)-M(y))=1-2\#(\bbO\cap r)\,,
\]
and the lemma follows.
\end{proof}

\subsection{Combinatorial link Floer homology}

In this paragraph, we recall the main properties of the combinatorial version of link Floer homology over~$\bbZ_2$ following~\cite{manolescu2007combinatorial},
displaying the ones that will be needed in the sequel.

\medskip

Let~$G$ be a grid diagram. Let~$R$ denote the polynomial algebra over~$\bbZ_2$ generated by variables~$\{U_i\}_{i=1}^{n}$ which are in one-to-one correspondence with the elements
of~$\bbO$. This ring is endowed with a \emph{Maslov grading}, defined such that the constant terms are in Maslov grading zero and the~$U_i$'s in grading~$-2$. It is also endowed with an \emph{Alexander multi-filtration}, defined so that the constants are in filtration level zero while the variables~$U_j$ corresponding to the~$i^\mathrm{th}$ component drop the~$i^\mathrm{th}$ multi-filtration level by one and preserve the others.

Let~$C^{-}(G)$ be the free~$R$-module with generating set~$S$. It has a Maslov grading and an Alexander filtration induced by the ones on~$S$ and~$R$.
It turns out that one can define an endomorphism~$\partial^{-}\colon C^{-}(G)\longrightarrow C^{-}(G)$ which is a differential, drops the Maslov grading by one and
preserves the Alexander filtration level. Furthermore, this complex provides a link invariant as follows.
Let~$L$ be an oriented link given by a grid diagram~$G$. Number the elements of~$\bbO=\{O_i\}_{i=1}^{n}$ so that~$O_1,\ldots,O_{\mu}$ correspond to the different components
of the link. Then the filtered quasi-isomorphism type of the complex~$(C^{-}(G),\partial^{-})$ seen as a module over~$\bbZ_2[U_1,\ldots,U_{\mu}]$ is a link
invariant~\cite[Theorem 1.2]{manolescu2007combinatorial}. In particular, the homology of the associated graded object is a link invariant. (We refer to subsection~2.1 of~\cite{manolescu2007combinatorial} for an explanation of this algebraic terminology.)

There are two variations on this construction. With an ordering of the~$O$'s as before, consider the chain complex~$\widehat{C}(G)$ obtained from~$C^{-}(G)$ by setting
$U_i=0$ for~$i=1,\ldots, \mu$. Let~$\widehat{\mathit{CL}}(G)$ denote the graded object associated to the Alexander filtration and let~$\widehat{\mathit{HL}}(G)$ be its homology. 
Also, let~$\widetilde{C}(G)$ be the chain complex obtained from~$C^{-}(G)$ by setting all the~$U_i$'s to~$0$. Let~$\widetilde{\mathit{CL}}(G)$ be the associated graded object and
let~$\widetilde{\mathit{HL}}(G)$ be its homology. These two complexes can be related using standard tools of homological algebra~\cite[Proposition 2.15]{manolescu2007combinatorial}.

\begin{proposition}
\label{hattild}
The homology groups~$\widehat{\mathit{HL}}(G)$ determine~$\widetilde{\mathit{HL}}(G)$ as follows:
\[
\widetilde{\mathit{HL}}(G) \cong \widehat{\mathit{HL}}(G) \otimes \bigotimes\limits_{k=1}^{\mu} V_k^{\otimes(n_k-1)}\,,
\]
where~$V_k$ is the two-dimensional vector space spanned by two generators, one in zero Maslov and Alexander multi-gradings, and the other in Maslov grading minus one and Alexander multi-grading corresponding to  minus the~$k^\mathrm{th}$ basis vector, and~$n_k$ is the number of~$O$'s corresponding to~$L_k$.\qed
\end{proposition}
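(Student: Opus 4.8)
The plan is to realize $\widetilde{\mathit{HL}}(G)$ as the result of successively setting the variables $U_{\mu+1},\dots,U_n$ equal to zero in the complex computing $\widehat{\mathit{HL}}(G)$, with each such operation contributing one tensor factor of the form $V_k$. Recall that $\widehat{C}(G)$ is obtained from $C^{-}(G)$ by killing only the distinguished variables $U_1,\dots,U_\mu$, so that it remains a free module over $\bbZ_2[U_{\mu+1},\dots,U_n]$, and that $\widetilde{C}(G)$ is obtained from it by additionally imposing $U_{\mu+1}=\dots=U_n=0$. The same relations hold after passing to the objects $\widehat{\mathit{CL}}(G)$ and $\widetilde{\mathit{CL}}(G)$ associated to the Alexander filtration, so it suffices to work at the level of these graded complexes and to remove the extra variables one at a time.

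The essential input is the homotopy lemma of~\cite{manolescu2007combinatorial}: if $O_i$ and $O_j$ lie on the same component $L_k$ of the link, then multiplication by $U_i$ and multiplication by $U_j$ are chain homotopic endomorphisms of $C^{-}(G)$, hence of each of the quotient complexes above. On $\widehat{\mathit{CL}}(G)$ the distinguished variable attached to $L_k$ has already been set to zero; consequently every remaining variable $U_j$ with $j>\mu$ corresponding to $L_k$ acts null-homotopically. This is precisely the property that makes each successive quotient behave like tensoring with a two-dimensional space.

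First I would record the following piece of homological algebra. Let $(C,\partial)$ be a finitely generated free complex over $\bbZ_2[U]$ on which multiplication by $U$ is null-homotopic, where $U$ carries bidegree $(-2,-e_k)$ for the Maslov and Alexander gradings. From the short exact sequence
\[
0\longrightarrow C\overset{U}{\longrightarrow}C\longrightarrow C/UC\longrightarrow 0
\]
one obtains a long exact sequence in homology in which the map induced by multiplication by $U$ recurs; as $U$ acts null-homotopically this map vanishes, so the long exact sequence breaks into short exact sequences $0\to H_*(C)\to H_*(C/UC)\to H_*(C)\to 0$ (up to the appropriate grading shift), which split because we work over the field $\bbZ_2$. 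Tracking the bigradings, the summand coming from the quotient map sits in the same bidegree as the corresponding class of $H_*(C)$, while the summand coming from the connecting map is carried by a generator $\theta$ with $\partial\theta=U$; since $\partial$ drops the Maslov grading by one and preserves the Alexander grading, $\theta$ lies in Maslov grading $-1$ and Alexander grading $-e_k$ relative to the first. In other words $H_*(C/UC)\cong H_*(C)\otimes V_k$, with $V_k$ exactly as in the statement.

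Applying this to $\widehat{\mathit{CL}}(G)$ and $U_n$, then to the result and $U_{n-1}$, and so on down to $U_{\mu+1}$, each variable attached to $L_k$ introduces one factor $V_k$. Since $L_k$ carries $n_k$ of the $O$'s, exactly one of which is distinguished, precisely $n_k-1$ such factors arise for each $k$, yielding $\widetilde{\mathit{HL}}(G)\cong\widehat{\mathit{HL}}(G)\otimes\bigotimes_{k=1}^{\mu}V_k^{\otimes(n_k-1)}$. The point requiring the most care is not the splitting itself but the verification that everything takes place within the bigraded, Alexander-filtered setting: one must check that the homotopy lemma, the short exact sequences, and the resulting isomorphisms are all compatible with both gradings, so that the tensor factors land in the precise bidegrees claimed. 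Granting the homotopy lemma of~\cite{manolescu2007combinatorial}, this bookkeeping is routine.
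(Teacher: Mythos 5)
Your argument is correct, and it is essentially the proof of the result the paper itself invokes without proof: Proposition~\ref{hattild} is quoted directly from~\cite[Proposition 2.15]{manolescu2007combinatorial} (hence the \emph{qed} at the end of the statement), and the standard argument there is exactly yours — the homotopy lemma makes each extra $U_j$ act null-homotopically once the distinguished variable of its component is killed, and each successive quotient $C/U_jC$ then splits off a factor $V_k$ via the (split, over $\bbZ_2$) long exact sequence of $0\to C\xrightarrow{U_j}C\to C/U_jC\to 0$, with the grading shift of the new generator $\theta$ (with $\partial\theta=U_j$) accounting for the bidegree $(-1,-e_k)$. Your bookkeeping of the Maslov and Alexander shifts matches the statement, so there is nothing to add.
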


We shall need one last property of link Floer homology~\cite[Proposition 5.3]{manolescu2007combinatorial}.

\begin{proposition}
\label{prop:LFH2}
Let~$\widehat{\mathit{HL}}_d(L,s)$ denote the subspace of~$\widehat{\mathit{HL}}(L)$ with Maslov grading~$d$ and Alexander grading~$s$.
Then,~$\widehat{\mathit{HL}}_d(L,s)$ and~$\widehat{\mathit{HL}}_{d-\sum_k s_k}(L,-s)$ are isomorphic for all~$d$,~$s$.\qed
\end{proposition}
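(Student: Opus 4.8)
The plan is to realize this symmetry by the combinatorial involution of the grid that exchanges the two sets of basepoints, $\bbX \leftrightarrow \bbO$, and to prove the statement first for the intermediate homology $\widetilde{\mathit{HL}}$ before transferring it to $\widehat{\mathit{HL}}$ through Proposition~\ref{hattild}. The key structural observation is that the differential computing $\widetilde{\mathit{HL}}$ counts empty rectangles whose interior avoids the \emph{entire} set $\bbX \cup \bbO$; since this set is visibly preserved by the swap, the swap fixes the chain complex $\widetilde{C}(G)$ and its differential on the nose, altering only the Maslov and Alexander gradings. Consequently one obtains, with no further work on the differential, an identification of the single homology group $\widetilde{\mathit{HL}}(G)$ carrying two distinct bigradings, and the whole problem reduces to comparing the grading $(M,A)$ read off from $\bbO$ with the grading $(M^\ast, A^\ast)$ read off from the swapped basepoints.

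The second step is the grading computation, which is a direct manipulation of the bilinear pairing $J$. Replacing $\bbX_k - \bbO_k$ by its negative in the defining formula for $A_k$ and using that $J$ is symmetric and bilinear gives at once
\[
A_k^\ast(x) = -A_k(x) - (n_k - 1).
\]
For the Maslov grading, expanding $M(x) - M^\ast(x) = J(x-\bbO,x-\bbO) - J(x-\bbX,x-\bbX)$ and simplifying with the symmetry of $J$ yields $2\,J(x,\bbX-\bbO) + J(\bbO,\bbO) - J(\bbX,\bbX)$; comparing this with the analogous expansion of $\sum_k A_k(x)$ shows that the Maslov grading changes by the Alexander grading (weighted by the appropriate factor) together with a state-independent constant determined by $n$ and $\mu$. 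Lemma~\ref{sign}, together with the empty-rectangle description of the differential, is what guarantees that the sign and parity conventions remain coherent throughout, so that these two identities assemble into a genuine bigraded isomorphism of $\widetilde{\mathit{HL}}$.

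The third step is the descent to $\widehat{\mathit{HL}}$ via Proposition~\ref{hattild}. Substituting the decomposition $\widetilde{\mathit{HL}}(G) \cong \widehat{\mathit{HL}}(G) \otimes \bigotimes_k V_k^{\otimes(n_k-1)}$ on both sides, one checks that the tensor factor $\bigotimes_k V_k^{\otimes(n_k-1)}$ is itself invariant under the grading involution found above: each $V_k$ contributes two generators that are interchanged by the rule $s_k \mapsto -s_k - (n_k-1)$ with the correlated Maslov shift, and the binomial coefficients $\binom{n_k-1}{a}$ guarantee the required symmetry of dimensions. This factor therefore absorbs exactly the constant $n - \mu$ and the Alexander offsets $-(n_k-1)$, leaving the clean involution $\widehat{\mathit{HL}}_d(L,s) \cong \widehat{\mathit{HL}}_{d - \sum_k s_k}(L,-s)$ on the remaining factor.

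The main obstacle is the bookkeeping in this last reduction: one must verify that the symmetric factor $\bigotimes_k V_k^{\otimes(n_k-1)}$ absorbs precisely the constants and Alexander shifts produced at the level of $\widetilde{\mathit{HL}}$, and normalize the Maslov coefficient so as to recover the displayed form of the statement. A secondary subtlety, requiring a short argument rather than a computation, is that the basepoint swap a priori describes a grid diagram for the reverse link $\overline{L}$; one must observe that, because the complex and differential are literally unchanged and only the two gradings differ, the single homology group simultaneously computes the invariants of $L$ and of $\overline{L}$, so the construction delivers a symmetry of $\widehat{\mathit{HL}}(L)$ itself rather than merely a comparison with its reverse.
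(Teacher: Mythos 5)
The paper does not prove this proposition: it is imported verbatim, with a \qed, from \cite[Proposition 5.3]{manolescu2007combinatorial}, so there is no internal argument to compare yours against. Your outline reconstructs the standard combinatorial proof from that reference: the swap $\bbX\leftrightarrow\bbO$ fixes $\widetilde{C}(G)$ and its differential (which counts empty rectangles disjoint from all of $\bbX\cup\bbO$), the two gradings are compared via $J$, and the result descends to $\widehat{\mathit{HL}}$ through Proposition~\ref{hattild}. The parts you carry out are correct: $A_k^{\ast}(x)=-A_k(x)-(n_k-1)$, and the factor $\bigotimes_k V_k^{\otimes(n_k-1)}$ does absorb exactly the constants $n-\mu$ and $-(n_k-1)$. (Lemma~\ref{sign} plays no role here, though: the coefficients are in $\bbZ_2$, and that lemma only enters the Euler-characteristic computation later.)

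Two points are genuine problems. First, the Maslov coefficient you leave unnormalized is $2$, not $1$: completing your expansion gives $M(x)-M^{\ast}(x)=2\sum_k A_k(x)+(n-\mu)$, so after dividing out the $V_k$ factors the symmetry reads $\widehat{\mathit{HL}}_d(L,s)\cong\widehat{\mathit{HL}}_{d-2\sum_k s_k}(\,\cdot\,,-s)$. No normalization recovers the displayed shift $d-\sum_k s_k$; indeed the statement as printed contradicts the paper's own computation $\Gamma_H=1$ for the Hopf link, which forces the Maslov gradings over $(\frac{1}{2},\frac{1}{2})$ and $(-\frac{1}{2},-\frac{1}{2})$ to have equal parity, whereas a shift by $\sum_k s_k=1$ would make them differ by one. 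The intended statement (as in the cited reference) has $2\sum_k s_k$; your method proves that version and cannot prove the printed one. Second, your handling of the orientation issue is not an argument. The swap exhibits one complex carrying two gradings, one computing the invariant of $L$ and the other that of the reverse $\overline{L}$ (all components reoriented); this yields only $\widehat{\mathit{HL}}_d(L,s)\cong\widehat{\mathit{HL}}_{d-2\sum_k s_k}(\overline{L},-s)$, and asserting that the group ``simultaneously computes both invariants'' does not convert this into a symmetry of $\widehat{\mathit{HL}}(L)$: for that you need the separate fact that $\widehat{\mathit{HL}}(\overline{L})\cong\widehat{\mathit{HL}}(L)$ as a multi-graded group, which does not follow from the complex being literally unchanged. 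The clean repair is to compose the swap with the reflection of the grid in a diagonal: the reflected grid with $\bbO^{\dagger}=\mathrm{refl}(\bbX)$ and $\bbX^{\dagger}=\mathrm{refl}(\bbO)$ represents the oriented link $L$ itself (the reflection mirrors the diagram, and re-imposing the vertical-over-horizontal convention mirrors it back), the reflection preserves $I(\cdot,\cdot)$ and hence $J$, and it carries empty rectangles to empty rectangles, so the rest of your computation goes through verbatim with $\overline{L}$ replaced by $L$.
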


\subsection{Euler Characteristic and~$\Gamma_L$}

Given a multi-graded vector space~$C=\bigoplus_{d,s}C_d(s)$ with Maslov grading~$d$ and Alexander grading~$s$, define its \emph{(graded) Euler characteristic} as
\[
\chi(\widehat{\mathit{HL}}(L);t):= \sum\limits_{d,s} (-1)^d t^s \mbox{dim}\,C_d(s)\,.
\]
Propositions~\ref{hattild} and~\ref{prop:LFH2} immediately imply:

\begin{corollary}
\label{cor:chi}
For any oriented link~$L$, we have the equalities in~$\bbZ[t_1^{\pm 1/2},\ldots,t_{\mu}^{\pm 1/2}]$:
\begin{eqnarray*} 
\chi(\widetilde{\mathit{HL}}(L);t)&=&\prod\limits_{k=1}^{\mu}(1-t_k^{-1})^{n_k-1}\,\chi(\widehat{\mathit{HL}}(L);t)\,,\\
\chi(\widehat{\mathit{HL}}(L);t)&=&\chi(\widehat{\mathit{HL}}(L);t^{\bf{-1}})\,.\qed
\end{eqnarray*}
\end{corollary}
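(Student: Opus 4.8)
The plan is to read off both identities directly from the two structural results by feeding them into the definition of the graded Euler characteristic, using only two elementary properties of $\chi$: that it is multiplicative under tensor products of bigraded vector spaces (under $\otimes$ the Maslov and Alexander gradings both add, so the monomials $(-1)^d t^s$ multiply), and that it is a finite Laurent polynomial, so that reindexing the summation over the gradings is harmless.

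For the first equality I would simply apply $\chi(\,\cdot\,;t)$ to the isomorphism $\widetilde{\mathit{HL}}(G)\cong\widehat{\mathit{HL}}(G)\otimes\bigotimes_{k=1}^{\mu}V_k^{\otimes(n_k-1)}$ of Proposition~\ref{hattild}. By the grading data recorded there, the two generators of $V_k$ contribute $(-1)^{0}t^{\mathbf 0}=1$ and $(-1)^{-1}t_k^{-1}=-t_k^{-1}$, so that $\chi(V_k;t)=1-t_k^{-1}$. Multiplicativity of $\chi$ then gives $\chi(\widetilde{\mathit{HL}}(L);t)=\chi(\widehat{\mathit{HL}}(L);t)\prod_{k=1}^{\mu}(1-t_k^{-1})^{n_k-1}$, which is exactly the first line.

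For the second equality I would begin from $\chi(\widehat{\mathit{HL}}(L);t)=\sum_{d,s}(-1)^d\dim\widehat{\mathit{HL}}_d(L,s)\,t^s$, substitute $t\mapsto t^{\mathbf{-1}}$, and reindex the summation by $s\mapsto -s$. This turns $t^{-s}$ back into $t^{s}$ and replaces $\dim\widehat{\mathit{HL}}_d(L,s)$ by $\dim\widehat{\mathit{HL}}_d(L,-s)$; the symmetry of Proposition~\ref{prop:LFH2} then identifies the latter with the dimension of $\widehat{\mathit{HL}}$ in Alexander degree $s$ and a Maslov degree shifted by (twice) the total Alexander grading. A final reindexing of the Maslov variable $d$ collapses the sum back to $\chi(\widehat{\mathit{HL}}(L);t)$, up to the sign that the grading shift introduces into the factor $(-1)^d$.

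The manipulations above are otherwise routine, and the one point I would single out as the main (if mild) obstacle is verifying that this last sign is $+1$. The reindexing of $d$ multiplies each monomial by $(-1)$ raised to the Maslov correction, so one must check that this exponent is even on the support of $\widehat{\mathit{HL}}(L)$. This holds because the correction equals twice the total Alexander grading and $\sum_k s_k$ is integral on the support, making the exponent an even integer and hence the sign trivial. Granting this parity check, both equalities drop out with no further input, which is why the statement is an immediate consequence of Propositions~\ref{hattild} and~\ref{prop:LFH2}.
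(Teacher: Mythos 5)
Your derivation of the first equality is exactly the intended one: the paper offers no written argument beyond citing Proposition~\ref{hattild}, and multiplicativity of $\chi$ under tensor product together with $\chi(V_k;t)=1-t_k^{-1}$ is all that is meant. The same is true of your overall strategy for the second equality (substitute $t\mapsto t^{\mathbf{-1}}$, reindex $s\mapsto -s$, apply the symmetry, reindex $d$), and you are right that the entire content of that line is the final sign check.

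However, your resolution of that sign check does not use Proposition~\ref{prop:LFH2} as it is actually stated. The proposition asserts $\widehat{\mathit{HL}}_d(L,s)\cong\widehat{\mathit{HL}}_{d-\sum_k s_k}(L,-s)$, so the reindexing of $d$ produces the sign $(-1)^{\sum_k s_k}$, not $(-1)$ to an exponent that is twice an integer. With that shift the parity argument fails: for the positive Hopf link the homology is supported at $s=(\pm 1/2,\pm 1/2)$, where $\sum_k s_k\in\{1,0,0,-1\}$, so the sign would be $-1$ on part of the support; indeed a shift by $\sum_k s_k$ would force the coefficients of $\chi(\widehat{\mathit{HL}}(H);t)$ at $(1/2,1/2)$ and $(-1/2,-1/2)$ to be opposite, whereas $(t_1^{1/2}-t_1^{-1/2})(t_2^{1/2}-t_2^{-1/2})$ has $+1$ at both. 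Your ``twice the total Alexander grading'' is the correct form of the symmetry (Proposition~5.3 of~\cite{manolescu2007combinatorial} has the shift $d-2\sum_k s_k$; the statement as reproduced in the paper appears to drop the factor~$2$), and with it your argument does go through --- but you should say explicitly that you are using that form rather than the one printed, since with the printed shift your claimed parity is simply false. Finally, the integrality of $\sum_k s_k$ on the support, which you assert without proof, is the remaining input: it holds because each $s_k$ lies in $\mathbb{Z}+\frac{1}{2}\mathrm{lk}(L_k,L\setminus L_k)$, so that $\sum_k s_k\in\mathbb{Z}+\sum_{i<j}\mathrm{lk}(L_i,L_j)=\mathbb{Z}$. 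A one-line justification of this point (or of the evenness of the Maslov shift directly) is needed for the proof to be complete.
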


Theorem~\ref{mresult} now follows from Proposition~\ref{gammavsnabla} and the following result.

\begin{proposition}
\label{chivsgamma}
For any oriented link~$L$, we have the equality
\[
\chi(\widehat{\mathit{HL}}(L);t)=\prod\limits_{k=1}^{\mu}(t_k^{1/2}-t_k^{-1/2}) \, \Gamma_L(t^{\mathbf{1/2}})\,.
\]
\end{proposition}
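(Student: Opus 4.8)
The plan is to compute both sides as explicit sums over the generating set $S$ and match them. Since the graded Euler characteristic of a complex equals that of its homology, and since the chain space $\widetilde{\mathit{CL}}(G)$ has $S$ as a basis with $x$ sitting in Maslov degree $M(x)$ and Alexander degree $A(x)$, I would start from
\[
\chi(\widetilde{\mathit{HL}}(L);t)=\sum_{x\in S}(-1)^{M(x)}\,t^{A(x)}.
\]
By the first equality of Corollary~\ref{cor:chi}, recovering $\chi(\widehat{\mathit{HL}}(L);t)$ from this sum only costs the factor $\prod_k(1-t_k^{-1})^{n_k-1}$, which is precisely the Euler-characteristic contribution of the spaces $V_k$ in Proposition~\ref{hattild}. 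So it suffices to identify the displayed sum with the corresponding multiple of $\prod_k(t_k^{1/2}-t_k^{-1/2})\,\Gamma_L(t^{\mathbf{1/2}})$.

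Next I would linearize the two gradings. By Lemma~\ref{sign} the product $(-1)^{M(x)}\sgn(x)$ is independent of $x$, and since $M(x_0)=1-n$ it equals $(-1)^{n-1}\sgn(x_0)$; hence $(-1)^{M(x)}=(-1)^{n-1}\sgn(x_0)\,\sgn(x)$. Writing each $x\in S$ as the permutation $\sigma$ sending row $i$ to the vertical line $\sigma(i)$ of its point $p_{i\sigma(i)}$, the bilinearity of $J$ turns the Alexander grading into an additive expression
\[
A(x_\sigma)=\sum_{i}\vec a_{\,i\sigma(i)}+\vec c,\qquad \vec a_{\,ij}=\Big(J(p_{ij},\bbX_k-\bbO_k)\Big)_{k},
\]
with $\vec c$ a constant vector collecting the $x$-independent terms. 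Consequently
\[
\chi(\widetilde{\mathit{HL}}(L);t)=(-1)^{n-1}\sgn(x_0)\,t^{\vec c}\sum_{\sigma}\sgn(\sigma)\prod_i t^{\vec a_{\,i\sigma(i)}}=(-1)^{n-1}\sgn(x_0)\,t^{\vec c}\det\calM,
\]
where $\calM$ is the $n\times n$ matrix with entries $\calM_{ij}=t^{\vec a_{ij}}$. The whole problem is thereby reduced to a determinant identity.

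The heart of the proof — and the step I expect to be the main obstacle — is to identify $\det\calM$ with the normalized Fox determinant $M(t)$ of Definition~\ref{def:gamma}, up to explicit monomials and a sign. Both quantities encode the same geometry: the half-integers $J(p_{ij},\bbX_k-\bbO_k)$ are winding-number counts of $L_k$ about the lattice point $p_{ij}$, and these agree, after the substitution $t\mapsto t^{\mathbf{2}}$, with the intersection exponents $\varphi_k$ recorded by the entries of $F_D$. To compare the two I would subtract consecutive rows of $\calM$; as already observed for $F_D$ in Subsection~\ref{proof}, each such difference is supported only on the two columns meeting the horizontal segment lying between the corresponding lines, so $\calM$ and $F_D$ acquire the same sparse shape. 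The $n\times n$ versus $(n-1)\times n$ discrepancy is resolved by the one horizontal line excluded from the Neuwirth relations (the reason the numbering ``starts with the second line''): the corresponding row of $\calM$ survives the differencing and plays the role of an extra bottom row augmenting $F_D$ to an $n\times n$ matrix, whose expansion along that row reproduces, via relation~(\ref{FFDC}), the factor $\theta(x_j)-1$ and hence the normalization defining $M(t)$. Carrying out these row and column reductions and bookkeeping the resulting monomials is exactly where the corner weights $\omega$, the curvature $t^\kappa$, and the signs $(-1)^u$, $\sgn(x_0)$ are produced and must cancel; I would pin down all conventions on the unknot, Hopf link and trefoil of Examples~\ref{example1}--\ref{ex2} before attempting the general case.

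Finally I would assemble the pieces: combining the determinant identity of the previous paragraph with the expression for $\chi(\widetilde{\mathit{HL}}(L);t)$ above and dividing by $\prod_k(1-t_k^{-1})^{n_k-1}$ as dictated by Corollary~\ref{cor:chi} yields $\chi(\widehat{\mathit{HL}}(L);t)=\prod_k(t_k^{1/2}-t_k^{-1/2})\,\Gamma_L(t^{\mathbf{1/2}})$. The half-integer powers appearing in $\calM$ are accounted for precisely by the $t\mapsto t^{\mathbf{2}}$ inside $M(t^{\mathbf{2}})$ together with the prefactor $\prod_k(t_k^{1/2}-t_k^{-1/2})$, so the identity lands in $\bbZ[t_1^{\pm 1/2},\dots,t_\mu^{\pm 1/2}]$ as required; as a consistency check, the symmetry~(\ref{gammasym}) of $\Gamma_L$ matches the second equality of Corollary~\ref{cor:chi}.
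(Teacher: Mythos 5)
Your overall strategy coincides with the paper's: both write $\chi(\widetilde{\mathit{HL}}(L);t)=\sum_{x\in S}(-1)^{M(x)}t^{A(x)}$, use Lemma~\ref{sign} together with the bilinearity of $J$ to convert this sum into $(-1)^{M(x_0)}\sgn(x_0)\,t^{\vec c}\det\calM$ for the winding-number matrix $\calM=W(G)$, invoke Corollary~\ref{cor:chi} to pass between $\widetilde{\mathit{HL}}$ and $\widehat{\mathit{HL}}$, and then reduce everything to comparing $\det W(G)$ with the normalized Fox determinant. Up to that point your argument reproduces Equation~(\ref{Wchi}) of the paper correctly.

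The gap is in the step you yourself flag as the main obstacle, and the sketch you give for it would not go through. The Neuwirth generators $x_j$ are the \emph{vertical} segments, so the factors $\theta(x_j)-1$ must be produced by subtracting consecutive \emph{columns} of $W(G)$: two horizontally adjacent lattice points are separated by at most one vertical segment, so the differenced column $j+1$ equals $(\theta(x_j)^{\pm1}-1)$ times the $j$-th column of $F_D$ (with a zero in the last row, the bottom row of $W(G)$ being all ones). Subtracting consecutive \emph{rows}, as you propose, isolates the horizontal segments, which are not generators of the presentation; it yields a ``transposed'' Fox-type matrix whose identification with $F_D^j$ would itself require a further nontrivial argument. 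Your sparsity claim is also false: the difference of two adjacent rows of $W(G)$ (and likewise of $F_D$) is supported on \emph{all} columns lying in the span of the intervening horizontal segment, not just the two columns at its ends --- the remark in Subsection~\ref{proof} you are quoting is applied there only to short local segments. Likewise, the normalization $\frac{1}{\theta(x_j)-1}$ in $M(t)$ is not produced by expanding along the surviving bottom row via~(\ref{FFDC}); after the column operations that row becomes $(1,0,\dots,0)$, and expanding along it simply removes the first column, giving $\det W(G)=(-1)^{n+1}\prod_{j=1}^{n}(\theta(x_j)-1)\,\frac{\det(F_D^n)}{\theta(x_n)-1}$. Finally, the residual monomial $t^{\beta+\nu}$ left over after your ``bookkeeping'' is not verified by direct computation in the paper: its vanishing is \emph{deduced} from the symmetry~(\ref{gammasym}) together with the second identity of Corollary~\ref{cor:chi}, so what you relegate to a consistency check is in fact the argument that closes the proof and spares you the general tracking of $\omega$, $\kappa$, $u$ and $\nu$.
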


Before giving the proof, let us define one more matrix following~\cite[Section 6]{manolescu2007combinatorial}. 
Given a rectangular diagram~$D$ of size~$n$, (or equivalently, a grid diagram~$G$), define an~$n \times n$ matrix~$W(G)$ by 
\[
W(G)_{i,j}=t^{a(i,j)}\,,
\]
where~$a(i,j)\in\bbZ^{\mu}$ is the vector whose~$k^\mathrm{th}$ component is minus the winding number of~$L_k$ around the point~$(i,j)$,
counted from the second horizontal line and first vertical line. Note that this definition implies that~$W(G)_{n,j}=W(G)_{i,1}=1$ for all~$i,j$, as these points lie outside
the link. An easy example for the trivial knot is given below: the function~$a(i,j)$ is shown on the left and the matrix~$W(G)$ on the right.
\begin{center}
\includegraphics{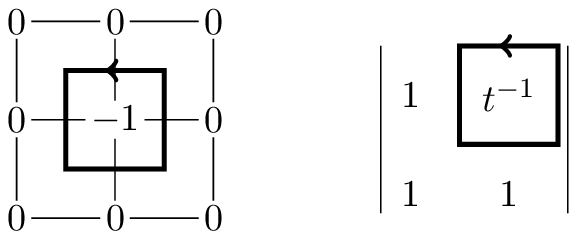}
\end{center}

\begin{proof}[Proof of Proposition~\ref{chivsgamma}]
Let~$D$ be a rectangular diagram representing~$L$. Let us first compare the determinant of~$W(G)$ with~$\Gamma_L=\Gamma_D$.
In~$W(G)$, subtract each column from the next one. In every column but the first of the resulting matrix, there is a zero where the vertical segment does not intervene, and where a vertical segment is present, the coefficient is divisible by~$\theta(x_j) - 1$. Therefore, for each column but the first, we can factor out~$\theta(x_j)-1$.
The last vertical segment which corresponds to the generator~$x_n$ does not contribute to these factors, as it does not lie in between two columns of the matrix.
Moreover, the last line is identically zero except for the first coefficient, which is one. Therefore,
\[
\mbox{det}(W(G))= (-1)^{n+1}\prod\limits_{j=1}^{n}(\theta(x_j)-1) \, \frac{\mbox{det}(A)}{\theta(x_n)-1}\,,
\]
where~$A$ is the~$(n,1)$-minor of the resulting matrix, a minor which is nothing but~$F_D^n$ (recall paragraph~\ref{rect}).
Define~$u_k$ as the number of upwards segments of~$L_k$, and let~$d$ be the total number of downward segments.
Transforming each term~$\theta(x_j)-1$ into~$1-t_k^{-1}$ by factoring out a sign if~$x_j$ is downwards and~$t_k$ if it is upwards, we get
\[
\mbox{det}(W(G))=(-1)^{1+d}\prod\limits_{k=1}^{\mu}t_k^{u_k}\prod\limits_{k=1}^{\mu}(1-t_k^{-1})^{n_k} \, \frac{(-1)^n \mbox{det}(F_D^n)}{\theta(x_n)-1}\,.
\]
Set~$\gamma\in(\frac{1}{2}\bbZ)^\mu$ such that~$t^{2\gamma}=\omega t^{\kappa}$ and~$\beta=\gamma + (u_1,\ldots,u_{\mu})$.
Multiplying both sides by the sign~$(-1)^{M(x_0)}\sgn(x_0)$ and using the equalities~$M(x_0)=1-n=1-d-u$, we eventually get
\begin{equation}
\label{WGamma}
(-1)^{M(x_0)}\sgn(x_0)\,\mbox{det}(W(G))=t^{\beta}\prod\limits_{k=1}^{\mu}(1-t_k^{-1})^{n_k} \, \Gamma_D(t^{\mathbf{1/2}})\,.
\end{equation}

Let us now relate the determinant of~$W(G)$ with~$\chi(\widehat{\mathit{HL}}(L);t)$. By Corollary~\ref{cor:chi},
\[
\prod\limits_{k=1}^{\mu}(1-t_k^{-1})^{n_k-1} \chi(\widehat{\mathit{HL}}(L);t)= \chi(\widetilde{\mathit{HL}}(L);t)=\chi(\widetilde{C}(G);t)\,.
\]
Furthermore, using the definitions of~$\widetilde{C}(G)$ and of~$A(x)$ together with Lemma~\ref{sign}, we have
\[
\begin{array}{ccl}

\chi(\widetilde{C}(G);t) &=& \sum_{d,s} (-1)^d t^s \mbox{dim}(\widetilde{C}_d(s)) \\
   & = & \sum_{d,s} (-1)^d t^s \# \{ x \in S \, | \, M(x)=d \mbox{ and } A(x) = s \} \\
   & = & \sum_{x \in S} (-1)^{M(x)} t^{A(x)} \\
   & = & (-1)^{M(x_0)} \sgn(x_0)\,\sum_{x \in S} \sgn(x)\,t^{A(x)} \\
   & = & (-1)^{M(x_0)} \sgn(x_0)\,t^{\nu +\frac{1}{2}} \sum_{x \in S} \sgn(x)\prod_{k=1}^{\mu}t_k^{J(x,\bbX_k - \bbO_k)}\,,
\end{array}
\]
where~$\nu_k=J(-\frac{1}{2}(\bbX+\bbO),\bbX_k - \bbO_k) -\frac{n_k}{2}$,~$\nu=(\nu_1,\ldots, \nu_{\mu})$ and~$\bbX_k$ and~$\bbO_k$ are the markings corresponding~$L_k$.
It is straightforward to see that~$J(x,\bbX_k - \bbO_k)$ is minus the sum of the winding numbers of~$L_k$ around the points in~$x$. Therefore 
\[
\displaystyle \sum\limits_{x \in S} \sgn(x)\prod\limits_{k=1}^{\mu}t_k^{J(x,\bbX_k - \bbO_k)} = \mbox{det}(W(G))\,,
\]
and we eventually obtain
\begin{equation}
\label{Wchi}
(-1)^{M(x_0)}\sgn(x_0)\,\mbox{det}(W(G))=t^{-\nu-\frac{1}{2}}\prod\limits_{k=1}^{\mu}(1-t_k^{-1})^{n_k-1} \, \chi(\widehat{\mathit{HL}}(L);t)\,.
\end{equation}

Equations~(\ref{WGamma}) and~(\ref{Wchi}) lead to
\[
\chi(\widehat{\mathit{HL}}(L);t) = t^{\beta+\nu} \displaystyle \prod\limits_{k=1}^{\mu}(t_k^{1/2}-t_k^{-1/2}) \, \Gamma_D(t^{\mathbf{1/2}})\,.
\]
The fact that~$\beta + \nu$ vanishes now follows from Equation~(\ref{gammasym}) and the second part of Corollary~\ref{cor:chi}.
This concludes the proof.
\end{proof}


\section{Other models and homology spheres}
\label{models}

As mentioned in the introduction, there are four different constructions of a well-defined representative of the multivariable Alexander polynomial. In this slightly informal section,
we discuss the identification of these models and of their extension to integral homology spheres.

\medskip

Let us start by recalling these four models.

\begin{enumerate}
\item{As explained in paragraph~\ref{sub:hartley}, Hartley's model~$\nabla_L$ is based on Fox free differential calculus and the Wirtinger presentation of the link group~\cite{hartley1983conway}.}
\item{Turaev's model~$\tau_L$ is constructed using sign-refined Reidemeister torsion~\cite{turaev1986reidemeister}.}
\item{The second author's model~$\Omega_L$ is defined via generalized Seifert surfaces~\cite{cimasoni2004geometric}.}
\item{Finally, Ozsváth and Szabó's link Floer homology~\cite{ozsvath2004holomorphic} provides one last model in the form of its renormalized Euler characteristic
$\chi_L(t):=\prod_k(t_k-t_k^{-1})^{-1}\chi(\widehat{\mathit{HL}}(L);t^\mathbf{2})$.}
\end{enumerate}

All these models being symmetric representatives of the Alexander polynomial, they agree up to sign. Theorem~\ref{mresult} states that~$\nabla_L$ and~$\chi_L$ coincide,
while the equality~$\Omega_L=\nabla_L$ is checked in~\cite{cimasoni2004geometric}. There is no published proof of the fact that~$\tau_L$ agrees with~$\nabla_L$, so let us
explain one way of verifying this fact. In~\cite[Section 4]{turaev1986reidemeister}, Turaev gave a list of six axioms satisfied by~$\tau_L$ and characterizing this invariant
for links in~$S^3$. It is not difficult to check that the geometric model~$\Omega_L$ satisfies these axioms: the first five follow from~\cite{cimasoni2004geometric}
(together with the proof of~\cite[Proposition 2.5]{C-F}), while the last one -- the so-called {\em doubling axiom\/} -- can be verified using the same elementary homological techniques. Hence, all
four models coincide for links in~$S^3$.

\medskip

Now note that while Hartley's model~$\nabla_L$ is only defined for links in the standard sphere~$S^3$, the definition of the other three models can be extended \emph{verbatim}
to links in arbitrary integral homology spheres. Do they coincide in this more general setting? To address this question, let us recall two properties of~$\nabla_L$.

\bigskip

\noindent{\bf Torres formula}~\cite{Torres,hartley1983conway}. If~$L=L_1\cup\dots\cup L_\mu$ and~$L^*=L\cup L_{\mu+1}\cup\dots\cup L_n$, then
\[
\nabla_{L^*}(t_1,\dots,t_\mu,1,\dots,1)=f_{L^*,L}(t_1,\dots,t_\mu)\,\nabla_L(t_1,\dots,t_\mu)\,,
\]
where~$f_{L^*,L}(t_1,\dots,t_\mu)=\prod_{i=\mu+1}^n(t_1^{\ell_{1i}}\cdots t_\mu^{\ell_{\mu i}}-t_1^{-\ell_{1i}}\cdots t_\mu^{-\ell_{\mu i}})$ and~$\ell_{ij}=\mathrm{lk}(L_i,L_j)$.

\bigskip

\noindent{\bf Variance under surgery}~\cite{boyer1992conway}. Let~$L$ be a framed link in~$S^3$ such that the manifold obtained by surgery along~$L$ is again~$S^3$. Then,
\[
\nabla_{\widehat{L}}(t)=\det(B)^{-1}\,\nabla_L(t\cdot B^{-1})\,,
\]
where~$\widehat{L}$ is the link given by the cores of the surgery tori,~$B$ is the framing matrix associated to~$L$, and~$t\cdot A$ stands for 
$(t_1^{a_{11}}t_2^{a_{21}}\cdots t_\mu^{a_{\mu 1}},\dots,t_1^{a_{1\mu}}t_2^{a_{2\mu}}\cdots t_\mu^{a_{\mu\mu}})$ if~$A=(a_{ij})$.

\bigskip

The following proposition is an easy consequence of the work of Boyer-Lines~\cite{boyer1992conway}.

\begin{proposition}
Let~$\nabla$ and~$\nabla'$ be two invariants of links in homology spheres such that:
\begin{enumerate}
\item{$\nabla_L=\nabla'_L$ for any link~$L$ in~$S^3$,}
\item{$\nabla$ and~$\nabla'$ satisfy the Torres formula for any link in a homology sphere, and}
\item{$\nabla$ and~$\nabla'$ satisfy the variance under surgery formula for any framed link in~$S^3$ such that the surgery manifold is a homology sphere.}
\end{enumerate}
Then,~$\nabla$ and~$\nabla'$ coincide for any link in a homology sphere.
\end{proposition}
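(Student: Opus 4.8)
The plan is to reduce any link in a homology sphere to a link in~$S^3$, where~$\nabla$ and~$\nabla'$ are already assumed to agree, by realizing the given link as part of the cores of a surgery on a framed link in~$S^3$. Since both invariants obey the same surgery and Torres formulas, and coincide on the~$S^3$ side of the surgery formula, the equality will propagate from~$S^3$ to the homology sphere.

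Concretely, I would fix a link~$L=L_1\cup\dots\cup L_\mu$ in a homology sphere~$\Sigma$ and present~$\Sigma$ as surgery on a framed link~$J\subset S^3$, isotoped into the complement of~$L$. To turn~$L$ itself into surgery cores, I would attach to each~$L_i$ a small~$0$-framed meridian~$m_i$ and give~$L_i$ the~$0$-framing; each pair~$L_i\cup m_i$ is a cancelling (Hopf) pair, so surgery on it leaves the ambient manifold unchanged while making one of its cores isotopic to~$L_i$. This is standard Kirby calculus. The augmented framed link~$K=J\cup L\cup m\subset S^3$ then has surgery manifold~$\Sigma$, a framing matrix~$B$ with~$\det(B)=\pm1$ (because~$\Sigma$ is a homology sphere), and core link~$\widehat K=L\sqcup E$, where~$E=\widehat J\cup m'$ collects the cores of the~$J$-tori together with the meridional cores~$m'_i$ of the~$L_i$-tori.

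With this realization in hand the argument is short. Since~$K\subset S^3$, hypothesis~(1) gives~$\nabla_K=\nabla'_K$, and applying the surgery formula of hypothesis~(3) to both sides yields
\[
\nabla_{\widehat K}=\det(B)^{-1}\nabla_K(t\cdot B^{-1})=\det(B)^{-1}\nabla'_K(t\cdot B^{-1})=\nabla'_{\widehat K}
\]
in~$\Sigma$. I would then strip off the extra components of~$E$ one at a time using the Torres formula of hypothesis~(2): each removal multiplies~$\nabla_L$ (resp.~$\nabla'_L$) by the \emph{same} factor, since the Torres factor depends only on linking numbers in~$\Sigma$. Comparing the two chains of reductions and invoking~$\nabla_{\widehat K}=\nabla'_{\widehat K}$ gives~$f\cdot\nabla_L=f\cdot\nabla'_L$ for the total factor~$f$, whence~$\nabla_L=\nabla'_L$ as soon as~$f\neq0$.

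The main obstacle is precisely this nonvanishing of~$f$. The meridional cores~$m'_i$ each link~$L_i$ once and thus contribute harmless factors~$t_i-t_i^{-1}\neq0$; the trouble comes from the cores~$\widehat J$, which a priori may have vanishing linking number with~$L$, producing a vanishing Torres factor. To rule this out I would choose the surgery presentation of~$\Sigma$ with care—after handle slides of~$J$ and isotopies of~$L$ in~$\Sigma$—so that every component of~$\widehat J$ links~$L$ nontrivially, and then order the Torres reductions so that each successive factor is nonzero. The existence of such a presentation, which is exactly what makes the proposition an \emph{easy} consequence once the surgery and Torres formulas are available, is the geometric input furnished by the work of Boyer--Lines~\cite{boyer1992conway}; granting it, the proposition follows.
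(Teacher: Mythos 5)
Your argument is correct and takes essentially the same route as the paper: both realize $L$ as a sublink of the core link of a surgery presentation of $\Sigma$ over $S^3$, transfer the equality $\nabla=\nabla'$ through the surgery formula, and then descend to $L$ via the Torres formula, with the nonvanishing of the Torres factor being the one genuine geometric input. The paper simply cites Lemma~2.3 of Boyer--Lines, which packages together your explicit Kirby-calculus construction of the auxiliary link and the nonvanishing condition you correctly identify as the crux.
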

\begin{proof}
Let~$L$ be a link in a homology sphere~$\Sigma$. By~\cite[Lemma 2.3]{boyer1992conway}, there exists a link~$L^*\subset\Sigma$ containing~$L$ as a sublink, such that
$f_{L^*,L}(t)$ does not vanish and~$\Sigma\setminus L^*$ is homeomorphic to~$S^3\setminus L^0$ for some link~$L^0$ in~$S^3$. The image of the meridians under this homeomorphism
defines a framing of~$L^0$ such that the resulting surgery manifold is~$\Sigma$ and the resulting link~$\widehat{L^0}$ is~$L^*$. By the first and third assumption, we have
\[
\nabla_{L^*}(t^*)=\det(B)^{-1}\,\nabla_{L^0}(t^*\cdot B^{-1})=\det(B)^{-1}\,\nabla'_{L^0}(t^*\cdot B^{-1})=\nabla'_{L^*}(t^*)\,.
\]
Using this equality together with the second assumption, we get
\[
f_{L^*,L}(t)\nabla(t)=\nabla_{L^*}(t,1)=\nabla'_{L^*}(t,1)=f_{L^*,L}(t)\nabla'_L(t)\,,
\]
and the conclusion follows since~$f_{L^*,L}(t)$ does not vanish.
\end{proof}

This result can be used to identify~$\Omega_L$ and~$\tau_L$. Indeed, we already know that they coincide for links in~$S^3$. The Torres formula for~$\tau_L$ is a special case of
Theorem 1.4 in~\cite[Chapter VII]{Tu}, while it can be checked for~$\Omega_L$ using standard homological computations. As for the variance of~$\tau_L$ under surgery,
we have found no proof
in the literature but several related results are known (see e.g.~\cite[Chapter VIII]{Tu} and~\cite{Cim}) and the same methods apply to give the desired result.
Unlike~$\tau_L$, the geometrical model~$\Omega_L$ is unfortunately not suited for surgery considerations. However, the variance under surgery can be deduced from several
simpler properties~\cite{Boyer} which can be checked for~$\Omega_L$. In conclusion,~$\Omega_L$ and~$\tau_L$ coincide for any link in any homology sphere.

We have not been able to find in the literature a result in link Floer homology that categorifies the variance of~$\chi_L$ under surgery. For this reason, we do not know if this method applies to identify~$\chi_L$ and~$\tau_L$ is the general setting of homology spheres. Alternatively, one could try to identify these two models working directly from the
definition.

\bibliographystyle{amsalpha}
\bibliography{bibli}
\end{document}